\title{Representation of integers by sparse binary forms}
\author{Shabnam Akhtari}
\address{Department of Mathematics\\
Fenton Hall\\
University of Oregon\\
Eugene, OR 97403-1222 USA}
 \email {akhtari@uoregon.edu}
 \author{Paloma Bengoechea}
\address{ ETH, Mathematics Dept.\\CH-8092, Z\"urich, Switzerland}
\email{paloma.bengoechea@math.ethz.ch}
\subjclass[2000]{11D45}
\keywords{Sparse binary forms,  Thue's inequalities, Fewnomials, Rational approximation}
\def\x{\langle \bm x\rangle}
\def\xx{|\bm x|}
\begin{document}

\newtheorem{thm}{Theorem}[section]
\newtheorem{prop}[thm]{Proposition}
\newtheorem{lemma}[thm]{Lemma}
\newtheorem{cor}[thm]{Corollary}
\newtheorem{conj}[thm]{Conjecture}
\newtheorem{rk}[thm]{Remark}
\newtheorem{defi}[thm]{Definition}

\begin{abstract}
We will give new upper bounds for the number of solutions to the inequalities of the shape $|F(x , y)| \leq h$, where $F(x , y)$ is a sparse binary form,  with 
integer coefficients, and $h$ is a sufficiently small integer in terms of the discriminant of the binary form $F$. Our bounds depend on the number of non-vanishing coefficients of $F(x , y)$. When $F$ is ``really sparse'', we establish a sharp upper bound for the number of solutions that is linear in terms of the number of non-vanishing coefficients. This work will provide affirmative answers to a number of conjectures posed by Mueller and Schmidt in \cite{S87} in special but important cases.
\end{abstract}
\maketitle

\section{Introduction and statements of the results}\label{Intro}

Let $F(x , y)$ be a binary form of degree $n\geq 3$ with integer  coefficients which is irreducible over the rationals.  Let $h$ be a positive integer.  By a classical result of Thue 
in  \cite{Thu}, we know that the inequality
\begin{equation}\label{Tin}
1\leq |F(x , y)| \leq h
\end{equation}
has at most finitely many solutions in integers $x$ and $y$. Such inequalities are called \emph{Thue's inequalities}. 

We will give upper bounds for the number of solutions to Thue's inequalities $|F(x , y)| \leq h$, where $F(x , y)$ is a sparse polynomial, and $h$ is sufficiently small in terms of the absolute value of the discriminant of $F$. Our bounds depend on the number of non-vanishing coefficients of the form $F$. To state  our results more precisely, let us suppose  that $F(x , y)$ is a form of degree $n
\geq 3$ which has no more than $s+1$ nonzero coefficients, so that
\begin{equation}\label{F}
F(x,y)=\sum_{i=0}^s a_i x^{n_i}y^{n-n_i}
\end{equation}
with $0=n_0<n_1<\ldots n_{s-1}<n_s=n$. We refer to such forms as sparse forms or \emph{fewnomials}.

\noindent
\textbf{Definition of Primitive Solutions}. A pair $(x , y) \in \mathbb{Z}^2$ is called a primitive solution to inequality \eqref{Tin}  if it satisfies the inequality and  
$\gcd(x , y) = 1$.  \\
We note that by this definition  $(\pm 1, 0)$ and $(0, \pm 1)$ are primitive, but, for example, if $z \neq \pm 1$, the possible solution $(z , 0)$ is not considered primitive. Suppose that 
$(x_{1}, y_{1})$ is a solution to the inequality \eqref{Tin}, then there exists an integer $t \neq 0$ such that $(x_{1}, y_{1}) = (t x_{2}, t y_{2})$ and $(x_{2}, y_{2})$ is a primitive solution of \eqref{Tin}.

Throughout this manuscript,   by $A \ll B$ we mean $A$ is bounded above by $B$ up to an explicit  constant that does not depend on any of the quantities 
$n,s,h$. Similarly, we say $A = \mathcal{O}(B)$ if $A \leq \kappa B$, for an absolute constant $\kappa$.
The following are our main theorems.

 \begin{thm}\label{newmain}
 Let $F(x,y)\in\mathbb{Z}[x,y]$ be an irreducible  binary form with  $s+1$ nonzero coefficients, degree $n> s$, and discriminant $D$. 
Let $h$ be an integer with 
\begin{equation}\label{mkappa}
0<h < \dfrac{|D|^{\frac{1}{8(n-1)}}}{(3n^{800\log^2n})^{n/2}(ns)^{2s+n}}.
\end{equation}
Let $\mathcal{N}(F, n, s, h)$ be the number of primitive solutions to the inequality 
$$1 \leq |F(x,y)|\leq h.
$$

(i) We have 
$$\mathcal{N}(F, n, s, h) \ll s\log s\min(1,\dfrac{1}{\log n -\log s}).$$

(ii)  Moreover, if $n\geq s^2$, we have 
$$\mathcal{N}(F, n, s, h) \ll s.$$
 \end{thm}

\begin{thm}\label{mainsmall}
  Let $F(x,y)\in\mathbb{Z}[x,y]$ be an irreducible  binary form with  $s+1$ nonzero coefficients, degree $n> s$, and discriminant $D$. 
Let $h$ be an integer with 
\begin{equation}\label{mkappa2} 
0 < h < \dfrac{|D| ^{\frac{1}{4(n-1)} }}{10^n \, n^{\frac{n}{4(n-1)}}}.
\end{equation}
Let $\mathcal{N}(F, n, s, h)$ be the number of primitive solutions to the inequality 
$$1 \leq |F(x,y)|\leq h.
$$
We have
$$
\mathcal{N}(F, n, s, h) \ll \sqrt{ns}.
$$
\end{thm}

The assumptions \eqref{mkappa} and \eqref{mkappa2} are quite strong, and are indeed helpful in our improvement of the previous bounds. 
 Generally one cannot expect that a  binary form of degree $n$ has such a large discriminant. However, by a result of Birch and Merriman  in \cite{BiMer}, 
 for a fixed degree $n$ only finitely many equivalence classes of irreducible binary forms of degree $n$ have bounded discriminant (see also \cite{EG91}). So our results, 
 while stated for a strong condition on the discriminant, namely \eqref{mkappa} or \eqref{mkappa2}, hold for almost all binary forms of a given degree.

In \cite{MS88} Mueller and Schmidt   obtained the upper bound
\begin{equation}\label{MSbound}
\mathcal{N}(F, n, s, h) \ll s^2h^{2/n}(1+\log h^{1/n}),
\end{equation}
for every positive integer $h$.
They could remove the logarithmic factor if $n\geq 4s$. 
One of  our contributions is to remove the 
dependency on $h$ for sufficiently small values of $h$. When $h$ is large, one naturally expects the factor $h^{2/n}$ to appear (see \cite{Mah9, ThunAnn}, for example).  After the statement of Theorem 1  in \cite{MS88}, which contains the bound \eqref{MSbound}, the authors conjecture that the factor $s^2$ should be replaced by $s$ (a conjecture originally due to Siegel). Our Theorem \ref{newmain} verifies this conjecture in case $s^2 < n$ and $h$ is small. Theorem \ref{newmain} also  improves the factor $s^2$ in \eqref{MSbound} significantly, namely by a quantity smaller than $s \log s$, again for small $h$.

Schmidt in \cite{S87} showed, for every positive integer $h$, that
\begin{equation}\label{Sbound}
\mathcal{N}(F, n, s, h) \ll (ns)^{1/2}h^{2/n}(1+\log h^{1/n}).
\end{equation}
 He conjectured that the logarithmic factor is unnecessary. 
In  \cite{Thun} Thunder replaced the factor $(1+\log h^{1/n})$ in the above bound by $(1+\log \log h /\log d)$. Using an effective result of 
Evertse and Gy\H{o}ry in \cite{EG91} on bounds for the height of binary forms in terms of their discriminant, Thunder reasons that  Schmidt's conjecture on 
unnecessariness of  $\log h^{1/n}$  holds ``essentially''. Here we will remove the dependence on $h$ in the upper bound \eqref{Sbound}, for sufficiently small $h$.

The problem of counting the number of solutions to Thue's inequalities $1 \leq \left| F(x , y) \right| \leq h$ for small 
integers $h$ has been considered previously, for example, in \cite{Sh15, Gyo1, Ste}, where upper bounds are of the shape
$c_{0} n$, where $n$ is the degree of $F$ and $c_{0}$ is an explicit constant.
 We aim for similar studies for fewnomials $F$.
Mueller in \cite{Mu} and Mueller and Schmidt in \cite{MS87} established bounds for the number of solutions of $\left| F(x , y)\right| \leq h$ for 
binomial and trinomial forms $F$.  These bounds are independent of $h$ and $n$, provided that $h$ is small in terms of $H(F)$, the maximum 
of absolute values of the coefficients of $F$. Based on their works on binomials and trinomials, Mueller and Schmidt conjectured in \cite{MS88}, 
provided that $h \leq H^{1-\frac{s}{n} - \rho}$, that the 
number of primitive solutions of \eqref{Tin}  is $\leq c(s,\rho)$, where $c(s,\rho)$  depends on $s$ and $\rho$ only.
To us it feels more natural to compare the size of $h$ with the discriminant (our methods are in sympathy with our intuition!). 
Our results in Theorems \ref{newmain} and  \ref{mainsmall} 
are under the assumption that the integer $h$ is bounded in terms of the absolute discriminant. 
 
 A very special and interesting type of fewnomials are binomials.
 In \cite{Sie2}, Siegel showed that the equation $0< |a x^n - by^n| \leq c$ has at most one primitive solution in positive integers $x$ and $y$ if
$$
\left|a b\right|^{\frac{n}{2} - 1} \geq 4 \left(n \prod_{p| n} p^{\frac{1}{p-1}}\right)^{n} c^{2n -2}.
$$
We note that the size of $c$ is compared to the discriminant, and not the height, of the binomial in Siegel's work.  Also in \cite{Eve82} Evertse extended the hypergeometric method of Siegel to give striking bounds for the number of solutions to Thue equation $a x^n - by^n = c$. These ideas have recently been generalized in \cite{AkhSS} for a larger family of Thue's inequalities which include binomial inequalities. In a breakthrough work \cite{Ben},  Bennett used a sophisticated combination of analytic methods, including the approximation tools from \cite{Sie2, Eve82},  to show 
 that
the  equation $a x^n - by^n = 1$,
with $a$ and $b$ positive,   has at most one solution in positive
integers $x, y$. This is a sharp result, as the
equation 
$(a+1)x^n-ay^n=1$
has precisely one solution $(1,1)$ in positive integers, for every positive integer $a$.

We will closely follow two fundamental works \cite{MS88, S87} of Mueller-Schmidt and Schmidt. These two papers introduce  different approximation methods 
and result in two different types of upper bounds, which are of the  shapes stated  in our Theorems \ref{newmain} and \ref{mainsmall}. 
In order to discuss how we use these ideas more clearly, we organize this article in two main parts. 
Part I explores some ideas in \cite{MS88} and \cite{Bom} further and establishes
the desired bounds in Theorem \ref{newmain}. Part II focuses on some extensions of results in \cite{S87} and contains the proof of Theorem 
\ref{mainsmall}. In both parts having the integer $h$ bounded by a function of discriminant is absolutely crucial in our application of approximation methods of \cite{MS88, S87}. Particularly in Part I, using the fact that the height of a binary form can also be bounded in terms of its absolute  discriminant, which is invariant up to $\textrm{GL}_2(\mathbb{Z})$ actions allows us to work with a given fewnomial and use some ideas from \cite{Bom}. Simply put, we control the size of all quantities that show up in classical approximation methods by the absolute value of the discriminant. 

We use two very important Lemmas established by Mueller and Schmidt, and recorded here as Propositions \ref{lmMS88} and \ref{thesetT}. Their paper \cite{MS88} includes an interesting discussion of the Newton polygon and its applications to the distribution of the roots of polynomials with only $s+1$ roots. It turns out that the roots of such polynomial are located in not more than $s$ fairly narrow annuli centered at the origin. This analysis is essential in establishing the extremely useful fact that every solution $(x , y)$ of the Thue's inequality gives a good rational approximation $\frac{x}{y}$ to a  root of the fewnomial  that belongs to a small subset of the set of all roots. The number of elements in this subset is estimated by $s$, as opposed to the general case, where one has to take into account all $n$ roots of a binary form of degree $n$.

 In our proofs in Part I (where we ultimately prove Theorem \ref{newmain}), we will need to assume that $n >  10 s$. This assumption  does not alter the statement of  Theorem \ref{newmain}, because technically if $n \leq 10 s$, the form $F(x , y)$ is not sparse and we can use a result of the first author for general Thue's inequalities in \cite{Sh15}, where the upper bound  $c_{0} n$ is established for the number of solutions of Thue's inequality $|F(x , y)| \leq h$ of degree $n$, and under the assumption \eqref{mkappa}.
 This way if $n \leq 10s$, we obtain the bound $c_{1} s $  for the number of primitive solutions to our inequality, where $c_{1} = 10\,  c_{0}$ is an explicit constant.

 This manuscript  is organized as follows.  After recalling  some basic facts and  useful theorems in Section \ref{secNot}, we will divide the article to two general parts. Part I includes Sections \ref{secpart1def}, 
 \ref{secpart1small}, \ref{secpart1med}, and  is devoted to the proof of Theorem \ref{newmain}.  Part II includes Sections \ref{secpart2def}, 
 \ref{secpart2small}, and is devoted to the proof of Theorem \ref{mainsmall}. 
 In both parts we estimate the number of solutions to our inequalities by splitting them in three or two subsets respectively: small, medium and
 large solutions for Part I,  and small and large solutions for Part II.
 The definition of small and large will differ in  Parts I and II. We  will give these definitions and other notation in Sections 
 \ref{secpart1def} 
 and \ref{secpart2def}. 
 The estimation of the number  of large solutions is not treated in details here, as some good bounds for the number of large solutions of Thue's inequalities have been 
 established in \cite{S87} and \cite{MS88}. We will define the size of solutions in a way that we can use corresponding previous results.

\section{Preliminaries}\label{secNot}

\subsection{ Discriminant, Height, and Mahler Measure}

For a binary form $G(x , y)$ that factors over $\mathbb{C}$ as
$$
\prod_{i=1}^{n} (\alpha_{i} x - \beta_{i}y),
$$
the discriminant $D(G)$ of $G$ is given by
$$
D(G) = \prod_{i<j} (\alpha_{i} \beta_{j} - \alpha_{j}\beta_{i})^2.
$$
Therefore, if we write
$$
G(x, y) = c (X - \gamma_{1}y)\ldots  (x- \gamma_{n}y),
$$ 
we have
$$
D(G) = c ^{2(n-1)} \prod_{i<j} (\gamma_{i}  - \gamma_{j})^2.
$$

The Mahler measure $M(G)$ of the form $G(x, y) = c (X - \gamma_{1}y)\ldots  (x- \gamma_{n}y)$ is defined by
$$
M(G) = |c| \prod_{i =1}^{n} \max(1 , \left| \gamma_{i}\right|).
$$
 Mahler \cite{Mah} showed
\begin{equation}\label{mahD5}
 M(G) \geq \left(\frac{|D|}{n^n}\right)^{\frac{1}{2n -2}},
 \end{equation}
where $D$ is the discriminant of $G$.
 
 Let  $G(x , y)=a_{n}x^{n} + a_{n-1}x^{n-1} y+ \ldots + a_{1}x y^{n-1} + a_{0}y^n$.
 The (naive) height of $G$, denoted by $H(G)$, is defined by 
\begin{equation}\label{height}
H(G) =\max  \left( |a_{n}|, |a_{n-1}|, \ldots , |a_{0}|\right).
\end{equation}
We have 
\begin{equation}\label{Lan5}
  {n \choose \lfloor n/2\rfloor}^{-1} H(G) \leq M(G) \leq (n+1)^{1/2} H(G).
\end{equation}
A proof of this fact can be found in \cite{Mahext}.

\subsection{ $GL_{2}(\mathbb{Z})$ Actions and Equivalent Forms}

Let 
$$
A = \left( \begin{array}{cc}
a & b \\
c & d \end{array} \right)
$$
and define the binary form $F_{A}$ by
$$
F_{A}(x , y) = F(ax + by , cx + dy).$$

We say that two binary forms $F$ and $G$ are equivalent if $G = \pm F_{A}$ for some $A \in GL_{2}(\mathbb{Z})$.

Observe that for any $2 \times 2$ matrix $A$ with integer entries
\begin{equation}\label{St6}
D(F_{A}) = (\textrm{det} A)^{n (n-1)} D(F).
\end{equation}

For  $A \in GL_{2}(\mathbb{Z})$, we have that $F_{A^{-1}} ( ax+by , cx+dy ) =  \pm F(x , y)$ and $\gcd(ax+by , cx+dy) =1 $ if and only if $\gcd(x , y) =1$. 
Therefore, the number of solutions (and the number of primitive solutions) to Thue's  inequalities does not change if we replace the binary form with an 
equivalent form.  Moreover the discriminants of two equivalent forms are equal. However,  $GL_2(\mathbb{Z})$-actions do not preserve the fact that $F$ has no more than 
$s+1$ non zero coefficients.  Also $GL_2(\mathbb{Z})$-actions do not preserve the height. So the counting problem for forms of the kind \eqref{F} does change.

To estimate the number of solutions to Thue's inequalities with fewnomials, Schmidt formulates in \cite{S87}  a condition that is invariant under
$GL_2(\mathbb{Z})$ actions.
Following Schmidt, we define a class $C(t)$ of forms of degree $n$ as follows. \\
\textbf{Definition of $C(t)$}. The set of forms
$F(x, y)$ of degree $n$ with integer coefficients, and irreducible over $\mathbb{Q}$, such that for any
reals $u,v\neq 0,0$,
the form 
\begin{equation}\label{C(t)}
uF_x+vF_y 
\end{equation}
has at most $t$ real zeros. 

Note that for $n > 0$, the irreducibility of $F$ implies that
the form \eqref{C(t)} of degree $n - 1$ is not identically zero. Also for $F\in C(t)$,
the derivative $f'(z) = F_x(z, 1)$ has fewer than $ t$ real zeros. The following is Lemma 2 of \cite{S87}.
 \begin{lemma} \label{SchL2}
 Suppose $F(x ,y)$ is irreducible of degree $n$, and  has  $s+1$ non-vanishing coefficients. Then $F(x , y) \in C(4s -2)$. 
  \end{lemma}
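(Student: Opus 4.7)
The plan is to apply Descartes' rule of signs to the dehomogenization $g(z) := uF_x(z,1) + vF_y(z,1)$, exploiting the sparsity of $F$. First I would observe that differentiating the fewnomial $F(x,y) = \sum_{i=0}^s a_i x^{n_i} y^{n-n_i}$ eliminates the $i = 0$ term from $F_x$ and the $i = s$ term from $F_y$. Hence
\[
F_x = \sum_{i=1}^{s} a_i n_i x^{n_i-1} y^{n-n_i}, \qquad F_y = \sum_{i=0}^{s-1} a_i(n-n_i) x^{n_i} y^{n-n_i-1}
\]
are each binary forms of degree $n-1$ with at most $s$ monomials, so $h := uF_x + vF_y$ has at most $2s$ distinct monomials in $(x,y)$, and $g(z)$ is a polynomial with at most $2s$ nonzero terms. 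Its exponents lie in the union $\{n_i-1 : 1\le i\le s\} \cup \{n_i : 0\le i\le s-1\}$, and overlaps between these two sets correspond to consecutive pairs $n_i = n_j+1$.

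Next I would invoke Descartes' rule of signs: if $g$ has $k$ nonzero terms, then it has at most $k-1$ positive real roots, and, applied to $g(-z)$, at most $k-1$ negative real roots, to which we may need to add one for a root at $z=0$. Passing back to the binary form $h$, the real zeros in projective space consist of the real roots of $g$ together with the point $(1:0)$, which is a zero of $h$ precisely when the coefficient of $x^{n-1}$ in $h$ vanishes.

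A short case analysis finishes the argument. If $u = 0$ or $v = 0$, then $k \le s$, trivially giving at most $2s \le 4s-2$ real zeros. If $u,v\neq 0$, then in the extremal case $k = 2s$ there are no overlapping exponents between $F_x$ and $F_y$; in particular $n_1 > 1$ and $n_{s-1} < n-1$. A direct computation then shows the constant term of $g$ equals $v a_0 n \neq 0$ and the coefficient of $z^{n-1}$ equals $u n a_s \neq 0$, so neither $z=0$ nor $(1:0)$ contributes, and Descartes' bound $2k-2 = 4s-2$ is the final count. When $u,v\neq 0$ and $k \le 2s-1$, Descartes gives at most $2k-1$ real roots of $g$, and adding one possible contribution from $(1:0)$ yields at most $2k \le 4s-2$.

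The main subtlety I expect is the boundary subcase where $(1:0)$ is a zero and $n_{s-1} = n-1$: here the identity $u n a_s + v a_{s-1} = 0$ both makes $(1:0)$ a zero of $h$ and cancels the $z^{n-1}$ monomial in $g$, which further reduces $k$. This cancellation is precisely what is needed to absorb the extra unit from $(1:0)$ and keep the final count at $4s-2$.
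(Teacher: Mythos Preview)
The paper does not supply its own proof of this lemma; it simply records it as Lemma~2 of Schmidt~\cite{S87}. Your argument via Descartes' rule of signs is correct and is essentially the proof Schmidt gives there: differentiation kills the $i=0$ monomial from $F_x$ and the $i=s$ monomial from $F_y$, so $uF_x+vF_y$ has at most $2s$ monomials, and Descartes applied to $g(z)$ and $g(-z)$ bounds the nonzero real roots by $2(k-1)$, with the contributions from $z=0$ and the point $(1:0)$ handled by your case split. Your three cases are exhaustive since $k\le 2s$ always, and the final paragraph on the $n_{s-1}=n-1$ boundary is a reassuring remark rather than an additional case---it is already covered by your Case~3, where $k\le 2s-1$ gives $2k\le 4s-2$ outright.
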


In Part II, we will consider inequalities of the shape $|F(x , y)| \leq h$, for forms $F \in C(4s -2)$.

\section{Part I: Strategy, outline and definitions}\label{secpart1def}

Let $\bm x = (x , y)$. We define
$$
\xx = \max (|x|,|y|),\qquad
\x = \min(|x|,|y|).$$ 
 
In order to establish our upper bounds in Theorem \ref{newmain}, we measure the size of possible solutions $(x , y)$ of our inequality by the size of 
$\x$ and $\xx$.

\textbf{Definition.} Relative to two quantities $Y_S$, $Y_L$, which will be defined below in \eqref{defY0} and \eqref{defY3}, we call a  solution $(x , y) \in \mathbb{Z}^2$ 
\begin{eqnarray*}
\textrm{small} \, \, &\textrm{if}& \, \, 0 < \x\leq Y_S, \\
\textrm{medium} \, \, &\textrm{if}& \, \,  \xx\leq Y_L \ \textrm{and}\   \x> Y_S,\\
\textrm{large} \, \, &\textrm{if}& \, \,  \xx >Y_L.
\end{eqnarray*}

We choose the constants below to be consistent with Mueller and Schmidt's work \cite{MS88}. 
Let 
\begin{equation}\label{defofR}
R=n^{800\log^2n}.
\end{equation}
From \eqref{mkappa}, we have
\begin{equation}\label{bound}
0<h\leq \dfrac{|D|^{\frac{1}{8(n-1))}}}{(3R)^{n/2}(ns)^{2s+n}}.
\end{equation}
Put
$$
C=Rh(2H\sqrt{n(n+1)})^n .
$$
Pick numbers $a,b$ with $0<a<b<1$ and let
\begin{equation}\label{deflambda}
\lambda=\dfrac{2}{(1-b)\sqrt{2/(n+a^2)}}.
\end{equation}
Note that if $a,b$ were chosen sufficiently small, then 
$$
n-\lambda=n-(\sqrt{2n+2a^2}/(1-b))>0.
$$
\textbf{Definitions of  $Y_S$ and $Y_L$.} We  define
\begin{equation}\label{defY0}
Y_S= \left( (e^6 s)^n R^{2s}  h \right)^{\frac{1}{n-2 s}},
\end{equation}
and
\begin{equation}\label{defY3}
Y_L=(2C)^{1/(n-\lambda)} (4(H(n+1)^{1/2}e^{n/2}))^{\lambda/((n-\lambda)a^2)}.
 \end{equation}
 By \eqref{bound}, we have
 $$
 (3R)^{n/2} h\leq \dfrac{|D|^{\frac{1}{8(n-1))}}}{(ns)^{2s+n}},
 $$
 which, together with \eqref{mahD5} and \eqref{Lan5}, implies
 \begin{equation}\label{Y3H}
Y_L \ll H^2.
\end{equation}

 \begin{prop}\label{psmall} 
The number of primitive small solutions of \eqref{Tin} with $F$ a fewnomial, defined  in \eqref{F}, and $h$ satisfying \eqref{bound} is no greater than $12s+16$.
\end{prop}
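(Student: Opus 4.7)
I would begin by isolating the degenerate primitive solutions with a coordinate equal to zero. Since $F(x,0)=a_sx^n$ and $F(0,y)=a_0y^n$ are pure $n$th powers up to nonzero constants, primitivity forces $|x|=1$ or $|y|=1$ in these cases, contributing the four solutions $(\pm 1,0)$ and $(0,\pm 1)$; these are absorbed in the additive constant of the target bound. For the remaining primitive small solutions one has $|x|,|y|\geq 1$, and by applying the involution $(x,y)\leftrightarrow(y,x)$, which replaces $F$ by an equivalent fewnomial with the same discriminant, the same number of nonzero coefficients, and the same primitive solution set, it suffices to bound the number of primitive small solutions satisfying $1\leq |y|\leq |x|$ and then double. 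Under this assumption $\x=|y|\leq Y_S$, and the rational $\sigma=x/y$ has $|\sigma|\geq 1$.

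Next, I would write $f(z)=F(z,1)/a_s=\prod_{i=1}^n(z-\alpha_i)$, recasting $|F(x,y)|\leq h$ as
\[
|a_s|\,|y|^n\prod_{i=1}^n|\sigma-\alpha_i|\leq h.
\]
The hypothesis \eqref{bound} on $h$, combined with the Mahler measure lower bound \eqref{mahD5}, forces the product to be extremely small, so $\sigma$ must lie very close to some $\alpha_i$. Because $\sigma$ is real while non-real roots are bounded below in absolute imaginary part (via the discriminant), the closest root must be real. By Lemma \ref{SchL2}, $F\in C(4s-2)$, so $F_x(z,1)$ has at most $4s-2$ real zeros, and Rolle's theorem then gives at most $4s-1$ real zeros of $f$. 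Consequently, each primitive small solution in the reduced case is associated to one of at most $4s-1$ real roots.

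For each fixed real root $\alpha$, I would bound the number of associated primitive solutions by a gap principle. If $(x_1,y_1)$ and $(x_2,y_2)$ are distinct primitive solutions with $x_i/y_i$ close to $\alpha$, primitivity gives $|x_1y_2-x_2y_1|\geq 1$ and hence
\[
\frac{1}{|y_1y_2|}\leq \left|\frac{x_1}{y_1}-\alpha\right|+\left|\frac{x_2}{y_2}-\alpha\right|.
\]
The approximation $|\sigma-\alpha_i|\leq h\bigl(|a_s|\,|y|^n\prod_{j\neq i}|\sigma-\alpha_j|\bigr)^{-1}$ obtained by isolating the closest-root factor, combined with the Newton polygon control of the ``non-closest'' factors from Proposition \ref{lmMS88}, is sharp enough that the gap inequality forces faster-than-geometric growth of $|y|$ among successive primitive solutions approximating $\alpha$. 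Under the ceiling $|y|\leq Y_S$ this leaves only a small absolute number of such solutions per root, and collecting gives a total of at most $6s+6$ primitive solutions in the reduced case; doubling for the symmetry and adding the four boundary solutions yields the claimed $12s+16$.

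The main obstacle is the last step: calibrating the constants so that the gap principle really caps the count per real root by an absolute constant, rather than by $O(\log Y_S)$. The essential leverage comes from the sharpness of \eqref{bound}, which makes $h$ so small relative to $|D|^{1/(2(n-1))}$ (and hence, through \eqref{mahD5}, relative to the Mahler measure $M(F)$) that the products $\prod_{j\neq i}|\sigma-\alpha_j|$ estimated via the Newton-polygon annular distribution of Proposition \ref{lmMS88} are large enough to make the approximation $|\sigma-\alpha|\leq(\text{small})/|y|^n$ genuinely strong. Matching this against the specific value of $Y_S$ in \eqref{defY0} requires care with the $(ns)^{2s/n}$ and $h^{1/(\kappa n)}$ factors, and it is this bookkeeping, rather than any new principle, that will produce the precise constants $12$ and $16$.
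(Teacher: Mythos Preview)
Your plan has two genuine gaps, and it misses the mechanism that actually drives the paper's proof.

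First, the assertion that ``non-real roots are bounded below in absolute imaginary part (via the discriminant), so the closest root must be real'' is not justified and is in general false. The discriminant controls pairwise root separations, not imaginary parts: a conjugate pair $\alpha,\bar\alpha$ contributes the factor $(2\,\mathrm{Im}\,\alpha)^2$ to $|D|$, but the remaining factors $\prod_{j}|\alpha-\alpha_j|^2$ can be enormous (of size comparable to a power of $M(F)$), so $|\mathrm{Im}\,\alpha|$ can be arbitrarily small even when $|D|$ is huge. Indeed, controlling how many roots have small imaginary part is a delicate matter treated separately in Part~II of the paper via Proposition~\ref{corsec9}; it is not a consequence of \eqref{bound}. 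Hence you cannot restrict attention to the at most $4s-1$ real roots, and the counting $4s-1$ (or $6s+6$) roots times ``an absolute constant per root'' collapses at this step.

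Second, even if one could associate each small solution to a root in a set of size $O(s)$, your gap principle does not give an absolute constant per root. To run a gap argument you need an approximation of the form $|\sigma-\alpha|\le C/|y|^{\theta}$ with $\theta>2$; your displayed bound $|\sigma-\alpha_i|\le h/(|a_s|\,|y|^n\prod_{j\ne i}|\sigma-\alpha_j|)$ only becomes strong once $\prod_{j\ne i}|\sigma-\alpha_j|$ is bounded below, and Proposition~\ref{lmMS88} does not provide that lower bound (it says the minimum distance to a root is attained, up to the factor $R$, on a set of $\le 6s+4$ roots, which is the opposite direction). An inequality of this strength is exactly what Proposition~\ref{thesetT} supplies, but only in the regime $\langle\bm x\rangle\ge Y_S$; for small solutions no such approximation is available, which is precisely why the paper treats small solutions by a different method.

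The paper's route is the Bombieri--Schmidt equivalent-form technique. One fixes a minimal solution $(x_0,y_0)$ and, via Lemma~\ref{S56}, attaches to every other solution $(x,y)$ a $\mathrm{GL}_2(\mathbb{Z})$-equivalent form with roots $\beta_i=\beta_i(x,y)$; translating by a nearest integer $m$ and invoking the Mahler--discriminant inequality \eqref{mahD5} gives the global lower bound $\prod_i\max(1,|\beta_i-m|)\ge |D|^{1/(2n-2)}/(hn^n)$. Lemmas~\ref{Sl5} and \ref{Sl6} turn this into upper bounds for the same product coming from each solution in a set $\mathfrak{X}^+$, and the hypothesis \eqref{bound} forces $\mathfrak{X}^+=\varnothing$. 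What remains are the at most two solutions in $\bm A$ together with the at most $|\bm S_1|\le 6s+5$ ``last'' solutions $(x^{(i)},y^{(i)})$ attached to the special roots of Proposition~\ref{lmMS88}; doing this for both coordinates yields the bound $12s+16$. Your proposal does not invoke Lemma~\ref{S56} or the Mahler-measure product argument at all, and without them the small-solution count cannot be closed.
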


\begin{prop}\label{pmedium} 
Let $\mathcal{N}_{\textit{m}}(F, n, s, h)$ be the number of primitive medium solutions of \eqref{Tin} with $F$  a fewnomial, defined  in \eqref{F},  $h$ satisfying \eqref{bound}, and $n > 10s$. We have
 $$ \mathcal{N}_{\textit{m}}(F, n, s, h)\ll s \log s \min(1,\frac{1}{\log n -\log s}).$$
 Moreover, if $n\geq s^2$, we have
$$\mathcal{N}_{\textit{m}}(F, n, s, h)\ll s.$$
\end{prop}

Our assumption $n > 10 s$ in Proposition \ref{pmedium} is to make our approximation methods work more smoothly. If $n \leq 10 s$ then  a much sharper version of Theorem \ref{newmain} will be implied by the first author's previous work \cite{Sh15} on general Thue's inequalities.

\begin{prop}\label{plarge} 
Let $\mathcal{N}_{\textit{l}}(F, n, s, h)$ be the number of primitive large solutions of \eqref{Tin} with $F$ a fewnomial, defined  in \eqref{F}, and $h$ satisfying \eqref{bound}. We have
$\mathcal{N}_{\textit{l}}(F, n, s, h) \ll s$.
\end{prop}

Our definitions of  $R$, $C$, $\lambda$, $Y_S$ and  $Y_L$ are the same as in \cite{MS88}. 
  Therefore, Proposition \ref{plarge} follows directly
from \cite[Prop. 1, p.211]{MS88}. 
We prove Propositions \ref{psmall} and \ref{pmedium} in Sections \ref{secpart1small} and \ref{secpart1med}, respectively.

\section{Small solutions (I), the proof of Proposition \ref{psmall}}\label{secpart1small}

 In this section, we estimate the number of primitive solutions $\bm{x} =(x , y)$ to \eqref{Tin} with $F$ a fewnomial, defined in \eqref{F},  
 for which $0\leq \x \leq Y_{S}$ 
 with $h$ bounded by \eqref{bound}.   We will present  a number of lemmas, some variations of which have been established by others in the past. The ideas of our  proofs  can be found in Chapter III of Schmidt's book \cite{Sch-Book}.
 
We first give a bound for the number of solutions such that $0\leq y\leq Y_S$. We estimate the number of solutions with $0\leq x\leq Y_S$  similarly. 
We will regard $(x,y)$ and $(-x,-y)$ as one solution, and can assume $y \geq 0$, if we need to.

\textbf{Definition of the minimal solution}. Suppose that there is at least one  solution to \eqref{Tin} with $0\leq y\leq Y_S$ and choose
 $(x_0,y_0)$ to be a solution with minimal $y\geq 0$ (if there is no small solution then Proposition \ref{psmall} is proven).  There might be more than one  solution with minimal non-negative $y$ to the Thue's inequality. We fix one of them for our
entire argument in this section, and will denote it by $\bm{x_{0} } =(x_0,y_0)$ and call it  \emph{the minimal solution}.

\textbf{Definition of $L_i(x,y)$.} For the binary form
$$
F(x,y)=a_n(x-\alpha_1y)\ldots(x-\alpha_ny),
$$
we define
$
L_i(x,y)=x-\alpha_iy
$,
for $i=1,\dots,n$.

Let $\bm{x}_1 = (x_1 , y_1)$ and $\bm{x}_2 = (x_2 , y_2)$. We define
\begin{equation}\label{defofD}
\mathcal{D}(\bm x_1,\bm x_2)=x_1y_2-x_2y_1.
\end{equation}

\begin{lemma}\label{S56}
Suppose $\bm{x} = (x , y)\in \mathbb{Z}^2$, with $\gcd (x , y) \neq 0$ and $\bm{x}  \neq \bm{x_{0}}$, satisfies $|F(x , y)| \leq h$. We have
$$
\frac{L_i(x_0,y_0)}{L_{i}(x , y)} - \frac{L_j(x_0,y_0)}{L_{j}(x , y)} = (\beta_{j} - \beta_{i}) \mathcal{D}(\bm x,\bm x_0),
$$
where $\beta_{1}$,\ldots, $\beta_{n}$ depend on $(x,y)$ and are such that the form
$$
J(u , w) = F(x,y) (u - \beta_{1}w)\ldots (u - \beta_{n}w)
$$
is equivalent to $F$.
\end{lemma}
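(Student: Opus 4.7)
My plan is to construct $J$ via a concrete $SL_2(\mathbb{Z})$-transformation and then deduce the identity by a basis-change computation. First, using $\gcd(x,y)=1$, I would invoke Bezout to produce integers $p,q$ with $xq-yp=1$ and set $A=\left(\begin{smallmatrix}x&p\\y&q\end{smallmatrix}\right)\in SL_2(\mathbb{Z})$, $J(u,w):=F_A(u,w)$. Then $J$ is equivalent to $F$ in the paper's sense automatically. Factoring $F_A$ directly through $xu+pw-\alpha_i(yu+qw)=L_i(x,y)u+(p-\alpha_iq)w$ and pulling out $F(x,y)=a_n\prod_i L_i(x,y)$ identifies the candidate $\beta_i:=(\alpha_iq-p)/L_i(x,y)$.

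Second, since $\det A=1$, the columns of $A$ form a $\mathbb{Z}$-basis of $\mathbb{Z}^2$, so I can decompose $(x_0,y_0)=s(x,y)+t(p,q)$. Cramer's rule gives $s=qx_0-py_0$ and $t=xy_0-yx_0=\mathcal{D}(\bm{x},\bm{x}_0)$. Plugging this into $L_i(x_0,y_0)=x_0-\alpha_i y_0$ yields the key identity $L_i(x_0,y_0)=(qx_0-py_0)L_i(x,y)-(\alpha_iq-p)\mathcal{D}(\bm{x},\bm{x}_0)$. Dividing by $L_i(x,y)$, subtracting the same identity with $j$ in place of $i$, and recognising the explicit $\beta_i$'s cancels the common term $qx_0-py_0$ and leaves exactly $\mathcal{D}(\bm{x},\bm{x}_0)(\beta_j-\beta_i)$, which is what is claimed.

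I do not expect any serious obstacle; once the right $A$ is in hand, the rest is routine algebra. The only conceptual trap to avoid is the tempting but incorrect matrix with columns $(x,y)$ and $(x_0,y_0)$: its determinant is $\mathcal{D}(\bm{x},\bm{x}_0)$, which is not $\pm 1$ in general, so the resulting $J$ would not be $GL_2(\mathbb{Z})$-equivalent to $F$ and the induced $\beta$-values would be off by exactly the factor $\mathcal{D}(\bm{x},\bm{x}_0)$. Using a Bezout completion of the primitive vector $(x,y)$ forces $\det A=1$, and then $\mathcal{D}(\bm{x},\bm{x}_0)$ enters precisely where the lemma predicts — as the coefficient of $(p,q)$ when $(x_0,y_0)$ is expanded in the basis $\{(x,y),(p,q)\}$.
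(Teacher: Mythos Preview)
Your proof is correct and is exactly the standard argument; the paper does not give its own proof but simply cites Bombieri--Schmidt \cite{Bom}, Stewart \cite{Ste}, and Evertse--Gy\H{o}ry \cite{EG16}, where the same Bezout-completion construction is carried out. The identical computation (with $(p,q)$ replaced by $(x',y')$) in fact appears later in the paper itself in the proof of Lemma~\ref{SchL5new}.
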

\begin{proof}
 This is Lemma  5 of \cite{EG16}, Lemma 
 4 of \cite{Ste} and Lemma 3 of \cite{Bom}.
\end{proof}

\begin{lemma}\label{L1}
 For each $i=1,\ldots,n$, among the  primitive solutions $(x,y)$ of \eqref{Tin} with $0<y\leq Y_S$, there is at most one such that  
$\left|L_{i}(x , y) \right| < 1/(2Y_S)$.
\end{lemma}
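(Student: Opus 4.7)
The plan is to run a standard gap-principle argument. I will assume for contradiction that two distinct primitive solutions $\bm{x}_1=(x_1,y_1)$ and $\bm{x}_2=(x_2,y_2)$ of \eqref{Tin}, both with $0<y_j\leq Y_S$, satisfy $|L_i(x_j,y_j)|<1/(2Y_S)$ for the same index $i$, and derive a contradiction by showing that the integer $\mathcal{D}(\bm{x}_1,\bm{x}_2)$ defined in \eqref{defofD} must both vanish and correspond to two genuinely different pairs.

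The first step is the algebraic identity
\begin{equation*}
\mathcal{D}(\bm{x}_1,\bm{x}_2)=x_1y_2-x_2y_1=L_i(x_1,y_1)\,y_2 - L_i(x_2,y_2)\,y_1,
\end{equation*}
which follows at once from $L_i(x,y)=x-\alpha_i y$: the two $\alpha_i$-contributions cancel. This is the ``linearization trick'' that makes all such gap principles work, and it is the only nontrivial input; no appeal to \cite{MS88} or to Lemma \ref{S56} is needed at this stage.

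From the identity and the triangle inequality, combined with the hypotheses $|L_i(\bm{x}_j)|<1/(2Y_S)$ and $y_j\leq Y_S$, I would conclude
\begin{equation*}
|\mathcal{D}(\bm{x}_1,\bm{x}_2)|\;<\;\frac{y_2}{2Y_S}+\frac{y_1}{2Y_S}\;\leq\;1.
\end{equation*}
Because $\mathcal{D}(\bm{x}_1,\bm{x}_2)\in\mathbb{Z}$, it must be zero, and therefore $x_1y_2=x_2y_1$. Since both $y_1$ and $y_2$ are strictly positive and each pair $(x_j,y_j)$ is primitive (so $\gcd(x_j,y_j)=1$), this forces $(x_1,y_1)=(x_2,y_2)$, contradicting the assumption that the two solutions were distinct.

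There is no real obstacle here; the argument is essentially one line once the linearization identity is written down. The only point that deserves attention is the sign convention on primitive solutions: the positivity condition $y>0$ imposed in the statement rules out the ambiguity $(x,y)\leftrightarrow(-x,-y)$, so primitivity together with $\mathcal{D}=0$ gives strict equality of the pairs rather than mere proportionality up to sign.
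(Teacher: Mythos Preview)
Your proof is correct and is essentially the same gap-principle argument as the paper's. The only cosmetic difference is that the paper divides through by $y\tilde y$ and derives $\tilde y>Y_S$ from $1/(\tilde y y)<1/(yY_S)$, whereas you keep the integer determinant $\mathcal{D}(\bm{x}_1,\bm{x}_2)$ and conclude it vanishes; the underlying identity and inequalities are identical.
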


\begin{proof} Suppose that $(x,y)$ and $(\tilde x , \tilde y)$ are two of such distinct solutions with $y\leq \tilde y$. Then
$$
\dfrac{1}{\tilde y y}
\leq \left|\dfrac{\tilde x}{\tilde y}-\dfrac{x}{y}\right|
\leq \left|\dfrac{\tilde x}{\tilde y}-\alpha_i\right|+\left|\dfrac{x}{y}-\alpha_i\right|
< \dfrac{1}{yY_S},
$$
 so that $\tilde y > Y_S$, which contradicts the assumption above.
 \end{proof}

 Suppose, without loss of generality, that
 \begin{equation}\label{fixL1}
 \left|L_1(x_0,y_0)\right|=\min_{1\leq i\leq n}(\left|L_i(x_0,y_0)\right|).
 \end{equation}
  Note that, since $|F(x_0,y_0)|\leq h$, 
 \begin{equation}\label{1}
 \left|L_1(x_0,y_0)\right|\leq h^{1/n}.
 \end{equation}
 
By Lemma \ref{L1}, there might exist a unique primitive solution $(x^*, y^*)$ such that $ 0 < y^* \leq Y_{S}$ and 
$$
\left|L_{1}(x^* , y^*) \right| < \dfrac{1}{2Y_S}.
$$
We define the set
 \begin{equation}\label{defofsetA}
 \bm{A} = \{ (x_{0} , y_{0}) , (x^*, y^*)\}.
 \end{equation}
 We note that $ 1\leq  \left|\bm{A} \right| \leq 2$.

By Lemma \ref{L1}, for any solution   $(x , y)\neq (x^*,y^*)$   with $0<y\leq Y_S$, we have
\begin{equation}\label{j}
\left|L_{1}(x , y) \right| \geq \dfrac{1}{2Y_S}.
\end{equation}
By Lemma \ref{S56} and \eqref{j},
\begin{equation}\label{S57}
\frac{|L_i(x_0,y_0)|}{\left|L_{i}(x , y) \right|} \geq |\beta_{1} - \beta_{i}| |\mathcal{D}(\bm x,\bm x_0)| - 2Y_S |L_1(x_0,y_0)|,
\end{equation}
where $\mathcal{D}$ is defined in \eqref{defofD}.
For the complex conjugate  $\bar{\beta_{1}}$  of $\beta_{1}$, we also have
$$  
\frac{|L_i(x_0,y_0)|}{\left|L_{i}(x , y) \right|} \geq |\bar{\beta_{1}} - \beta_{i}| |\mathcal{D}(\bm x,\bm x_0)| - 2Y_S |L_1(x_0,y_0)|.
$$
Hence
$$
\frac{|L_i(x_0,y_0)|}{\left|L_{i}(x , y) \right|} \geq |\textrm{Re}(\beta_{1}) - \beta_{i}| |\mathcal{D}(\bm x,\bm x_0)| - 2Y_S |L_1(x_0,y_0)|,
$$
where $\textrm{Re}(\beta_{1})$ is the real part of $\beta_{1}$.
Now we choose an integer $m = m(x , y)$, with $|\textrm{Re}(\beta_{1}) -m|\leq 1/2$, and we obtain 
\begin{equation}\label{S58}
\frac{|L_i(x_0,y_0)|}{\left|L_{i}(x , y) \right|} \geq \left(|m- \beta_{i}| -\frac{1}{2}\right) |\mathcal{D}(\bm x,\bm x_0)| -2Y_S |L_1(x_0,y_0)|,
\end{equation}
for $i = 1,\ldots , n$.
\\

\noindent \textbf{Definition of the sets  $\frak{X}_{i}$}.
Let $\frak{X}_{i}$ be the set of solutions $(x,y)\not\in\bm A$ with $1\leq y\leq Y_S$ and $|L_i(x,y)|\leq \frac{1}{2y}$, where $1\leq i \leq n$.

We note that if $\alpha_{i}$ and $\alpha_{j}$ are complex conjugates then $\frak{X}_{i} = \frak{X}_{j}$.

\begin{lemma}\label{Sl5}
 Suppose $(x_{1} , y_{1})$ and $( x_{2} , y_{2})$ are two distinct solutions in $\frak{X}_{i}$,  with $0< y_{1} \leq  y_{2}$ . Then
$$
\frac{ y_{2}}{y_{1}} \geq \frac{1}{2Y_S+7/3} \max(1 , |\beta_{i}(x_1 , y_1) - m(x_1 , y_1)|).
$$
\end{lemma}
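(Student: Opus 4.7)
The plan is to apply Lemma~\ref{S56} to the two solutions $(x_1,y_1)$ and $(x_2,y_2)$, taking $(x_2,y_2)$ in the role of ``$\bm x_0$'' so that the $\beta_j$'s appearing in the identity are precisely those attached to $(x_1,y_1)$, matching the $\beta_i(x_1,y_1)$ and $m(x_1,y_1)$ in the statement. I specialize the identity of Lemma~\ref{S56} to $j=1$ and also to the index $j_0$ for which $\alpha_{j_0}=\overline{\alpha_1}$ (so $\beta_{j_0}(\bm x_1)=\overline{\beta_1(\bm x_1)}$ and $|L_{j_0}(\bm x_k)|=|L_1(\bm x_k)|$ for $k=1,2$), and average the two. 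The average converts $\tfrac12(\beta_1+\beta_{j_0})$ into $\operatorname{Re}(\beta_1(\bm x_1))$, which I replace by $m=m(\bm x_1)$ at a cost of at most $\tfrac12$. The triangle inequality then yields
\[
\bigl(|m-\beta_i(\bm x_1)|-\tfrac12\bigr)\,|\mathcal D(\bm x_1,\bm x_2)| \;\leq\; \frac{|L_i(\bm x_2)|}{|L_i(\bm x_1)|} \;+\; \frac{|L_1(\bm x_2)|}{|L_1(\bm x_1)|}.
\]

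Next I bound the two ratios on the right using the elementary identity
\[
L_j(\bm x_1)\,y_2 - L_j(\bm x_2)\,y_1 \;=\; \mathcal D(\bm x_1,\bm x_2)\qquad (j=1,\dots,n),
\]
which follows directly from $L_j(\bm x)=x-\alpha_j y$. With $j=i$, the hypothesis $(x_2,y_2)\in\mathfrak X_i$ gives $|L_i(\bm x_2)|\leq 1/(2y_2)$; combined with $y_1\leq y_2$ this yields $|L_i(\bm x_1)|\geq(|\mathcal D|-\tfrac12)/y_2$ and hence the first ratio is at most $1/(2|\mathcal D|-1)$. With $j=1$, the assumption $(x_1,y_1)\notin\bm A$ combined with Lemma~\ref{L1} gives $|L_1(\bm x_1)|\geq 1/(2Y_S)$, so the identity gives $|L_1(\bm x_2)|/|L_1(\bm x_1)|\leq y_2/y_1+2Y_S\,|\mathcal D|/y_1$.

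To finish, the approximation bounds $|x_k/y_k-\alpha_i|\leq 1/(2y_k^2)$ (coming from $(x_k,y_k)\in\mathfrak X_i$) give $|\mathcal D(\bm x_1,\bm x_2)|\leq y_2/(2y_1)+y_1/(2y_2)$, and since $\mathcal D$ is a nonzero integer we have $|\mathcal D|\geq 1$. Plugging everything into the displayed inequality above and dividing by $|\mathcal D|$ produces a linear inequality of the form $|m-\beta_i(\bm x_1)|\leq c_1+c_2\,y_2/y_1$. Solving for $y_2/y_1$ in the regime where $|\beta_i-m|>1$, and otherwise invoking the trivial bound $y_2\geq y_1$, yields the claimed estimate $y_2/y_1\geq\max(1,|\beta_i(\bm x_1)-m(\bm x_1)|)/(2Y_S+\tfrac73)$.

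The main obstacle is the careful bookkeeping of the constants to land exactly on the denominator $2Y_S+\tfrac73$: the $2Y_S$ comes directly from the lower bound $|L_1(\bm x_1)|\geq 1/(2Y_S)$, while the $\tfrac73$ has to absorb the $\tfrac12$ shift coming from $|\operatorname{Re}(\beta_1)-m|\leq\tfrac12$, the contribution $1/(2|\mathcal D|-1)$ from the $L_i$-ratio (which is as large as $1$ when $|\mathcal D|=1$), and the slack needed to reconcile the nontrivial bound with the trivial bound $y_2/y_1\geq 1$ in the range $1\leq|\beta_i-m|\leq\tfrac32$ where the rearranged linear inequality alone is insufficient.
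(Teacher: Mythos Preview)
Your approach is sound and genuinely different from the paper's. The paper never couples $\bm x_1$ and $\bm x_2$ directly via Lemma~\ref{S56}; instead it first extracts the gap $y_2\ge 1/(2|L_i(\bm x_1)|)$ from the two approximations to $\alpha_i$, and then bounds $|L_i(\bm x_1)|$ from \emph{above} by comparing $\bm x_1$ with the fixed minimal solution $\bm x_0$ through the pre-established inequality \eqref{S58}. The key extra ingredient the paper uses is the estimate \eqref{32}, namely $|\mathcal D(\bm x_1,\bm x_0)|/(y_1|L_i(x_0,y_0)|)\ge 2/3$, which hinges on both $|L_1(x_0,y_0)|=\min_i|L_i(x_0,y_0)|$ and $y_0\le y_1$. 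Your route sidesteps $\bm x_0$ entirely and works only with $\bm x_1,\bm x_2$; this is cleaner conceptually and exploits the same two facts you isolate---that $(x_2,y_2)\in\mathfrak X_i$ controls the $L_i$-ratio and that $(x_1,y_1)\notin\bm A$ forces $|L_1(\bm x_1)|\ge 1/(2Y_S)$.

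One caveat on constants. Assembling your bounds exactly as written gives
\[
\bigl(|m-\beta_i|-\tfrac12-2Y_S\bigr)\,|\mathcal D|\;\le\;\frac{1}{2|\mathcal D|-1}+\frac{y_2}{y_1}\;\le\;1+\frac{y_2}{y_1},
\]
so $y_2/y_1\ge |m-\beta_i|-\tfrac32-2Y_S$, and the general inequality $\max(1,\zeta-c)\ge\max(1,\zeta)/(c+1)$ then yields denominator $2Y_S+\tfrac52$, not $2Y_S+\tfrac73$. This is not a real obstruction: the paper's own proof also lands slightly off the stated constant (its final display has $\max(1,\tfrac23|m-\beta_i|)$ rather than $\max(1,|m-\beta_i|)$), and when the lemma is applied in \eqref{yk} the denominator is already relaxed to $2Y_S+\tfrac72$, which both arguments comfortably deliver. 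So your plan works, but the assertion that the bookkeeping lands ``exactly on $2Y_S+\tfrac73$'' should be softened.
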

\begin{proof} We follow the proof of Lemma 4 of \cite{Bom}. 
We have that 
\begin{align}\label{DxSl5}
1\leq 
\left| y_2 x_1 - y_1   x_2\right|
&\leq y_1 |L_i( x_2, y_2)|+ y_2 \left| L_i(x_1 ,y_1)\right|\\
&\leq\dfrac{y_1}{2 y_2}+  y_2 \left|L_i(x_1,y_1)\right| \nonumber \\
&\leq \dfrac{1}{2}+ y_2 \left|L_i(x_1,y_1)\right|. \nonumber
\end{align}
Therefore,
 $$ 
 y_2\geq \frac{1}{2 \left| L_i(x_1,y_1)\right|}.
 $$
  Combining this with \eqref{S58}, we get
\begin{equation}\label{pre32}
\frac{ y_2}{y_1} \geq \dfrac{1}{2}\Big(|m-\beta_i|-\dfrac{1}{2}\Big)\dfrac{|\mathcal{D}(\bm x_1,\bm x_0)|}{y_1|L_i(x_0,y_0)|} - \dfrac{Y_S|L_1(x_0,y_0)|}{y_1|L_i(x_0,y_0)|},
\end{equation}
where $\beta_{i} = \beta_{i}(x_{1}, y_{1})$ are introduced in Lemma \ref{S56}, $m = m(x_1 , y_1)$ is an integer satisfying  $|\textrm{Re}(\beta_{1}) -m|\leq 1/2$, with $\beta_1 = \beta_{1}(x_{1}, y_{1})$.

Now, by \eqref{fixL1}, we have that $|L_1(x_0,y_0)|\leq |L_i(x_0,y_0)|$
and
\begin{eqnarray}\label{32}\nonumber
\dfrac{|\mathcal{D}(\bm x_1,\bm x_0)|}{y_1 |L_i(x_0,y_0)|} &=&\dfrac{\left|\dfrac{x_1}{y_1}-\dfrac{x_0}{y_0}\right|}{\left|\dfrac{x_0}{y_0}-\alpha_i\right|}
\geq
\dfrac{\left|\dfrac{x_1}{y_1}-\dfrac{x_0}{y_0}\right|}{\left|\dfrac{x_0}{y_0}-\dfrac{x_1}{y_1}\right|+\dfrac{1}{2y_{1}^2}}\\
&\geq&
\dfrac{1}{1+\dfrac{1}{2y_{1}^2\left|\dfrac{x_0}{y_0}-\dfrac{x_1}{y_1}\right|}}\geq \dfrac{2}{3},
\end{eqnarray}
where the last inequality is because  $\left|\dfrac{x_0}{y_0}-\dfrac{x_1}{y_1}\right|\geq \dfrac{1}{y_0y_1}$ and 
$$
\dfrac{1}{2y_{1}^2\left|\dfrac{x_0}{y_0}-\dfrac{x_1}{y_1}\right|}\leq \dfrac{y_0}{2y_1}\leq \dfrac{1}{2}.
$$
Therefore, by \eqref{pre32} and \eqref{32}, we have
\begin{align*}
\frac{ y_2}{y_1} &\geq \max\Big(1,\Big(|m-\beta_i|-\dfrac{1}{2}\Big)\dfrac{1}{3} - Y_S\Big)
\geq \max\Big(1,\dfrac{2}{3}(|m-\beta_i|\Big)\dfrac{1}{\frac{7}{3}+2Y_S},
\end{align*}
where in the second inequality  we used that $\max(1,\frac{\zeta}{2}-a)\geq \frac{1}{2a+2} \max(1,\zeta)$ with $\zeta=\frac{2}{3}|m-\beta_i|$ and 
$a=\frac{1}{6} + Y_S$.
\end{proof}

\begin{lemma}\label{Sl6}
Suppose $(x , y) \in \mathbb{Z}^2$, and $ (x , y) \not \in \bm A$,  with $0< y \leq Y_{S}$ and $\gcd(x , y) =1$,  satisfies $|F(x , y)| \leq h$  and 
$\left|L_{i}(x , y) \right|> \frac{1}{2y}$. Then 
$$
|m(x , y) - \beta_{i}(x , y)| \leq \dfrac{7}{2} + 2 \, h^{1/n} \, Y_S.
$$
\end{lemma}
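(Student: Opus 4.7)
The plan is to combine the lower bound \eqref{S58}, which controls $|L_i(x_0, y_0)|/|L_i(x, y)|$ from below in terms of $|m - \beta_i|$, with a matching \emph{upper} bound on the same ratio coming from the hypothesis $|L_i(x, y)| > 1/(2y)$ and the minimality of $\bm{x_0}$. Solving the resulting two-sided inequality for $|m - \beta_i|$ will give the stated estimate.

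First I would derive the identity
$$L_i(x, y)\, y_0 - L_i(x_0, y_0)\, y = x y_0 - x_0 y = \mathcal{D}(\bm{x}, \bm{x_0}),$$
obtained by expanding the two linear forms and noting that the $\alpha_i$-terms cancel. Rearranging and dividing by $L_i(x, y)$ yields
$$\frac{L_i(x_0, y_0)}{L_i(x, y)} = \frac{y_0}{y} - \frac{\mathcal{D}(\bm{x}, \bm{x_0})}{y\, L_i(x, y)}.$$
Since $(x_0, y_0)$ was chosen as the solution with smallest non-negative $y$-coordinate and $y > 0$ here, we have $y_0 \leq y$; combined with the hypothesis $|L_i(x, y)| > 1/(2y)$, this gives
$$\left|\frac{L_i(x_0, y_0)}{L_i(x, y)}\right| \leq 1 + 2 |\mathcal{D}(\bm{x}, \bm{x_0})|.$$

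Substituting this upper bound into \eqref{S58} and rearranging produces
$$\bigl(|m - \beta_i| - \tfrac{1}{2}\bigr) |\mathcal{D}(\bm{x}, \bm{x_0})| \leq 1 + 2 |\mathcal{D}(\bm{x}, \bm{x_0})| + 2 Y_S |L_1(x_0, y_0)|.$$
To finish, I would invoke two integrality inputs. First, since $(x, y)$ and $(x_0, y_0)$ are both primitive and distinct (the hypothesis $\bm{x} \notin \bm{A}$ excludes $\bm{x_0}$), $\mathcal{D}(\bm{x}, \bm{x_0})$ is a nonzero integer, so $|\mathcal{D}(\bm{x}, \bm{x_0})| \geq 1$. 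Second, \eqref{1} gives $|L_1(x_0, y_0)| \leq h^{1/n}$. Dividing through by $|\mathcal{D}(\bm{x}, \bm{x_0})|$ and applying these bounds collapses the right-hand side to $7/2 + 2 h^{1/n} Y_S$, as claimed. There is no substantive obstacle here: the lemma is a careful bookkeeping assembly of Lemma \ref{S56}, the minimality property of $\bm{x_0}$, the bound \eqref{1}, and the integrality of $\mathcal{D}$.
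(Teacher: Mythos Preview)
Your proposal is correct and follows essentially the same strategy as the paper's proof: combine the lower bound \eqref{S58} with an upper bound on $|L_i(x_0,y_0)|/|L_i(x,y)|$, then invoke $|\mathcal{D}(\bm x,\bm x_0)|\geq 1$ and \eqref{1}. The only cosmetic difference is in how that upper bound is obtained: the paper rewrites the ratio $\dfrac{|L_i(x_0,y_0)|}{|L_i(x,y)|\,|\mathcal{D}(\bm x,\bm x_0)|}$ in terms of $\big|\alpha_i-\tfrac{x}{y}\big|$ and $\big|\tfrac{x}{y}-\tfrac{x_0}{y_0}\big|$ and bounds it by $3$ via the triangle inequality, whereas you use the cleaner linear identity $L_i(x,y)y_0-L_i(x_0,y_0)y=\mathcal{D}(\bm x,\bm x_0)$ directly to get $|L_i(x_0,y_0)/L_i(x,y)|\leq 1+2|\mathcal{D}(\bm x,\bm x_0)|$. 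Your route has the mild advantage of not implicitly assuming $y_0>0$, but the two computations are otherwise equivalent and land on the same constant $7/2$.
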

\begin{proof}
 By \eqref{S58},  we have 
$$
|m-\beta_i|\leq \Big(\dfrac{|L_i(x_0,y_0)|}{|L_i(x,y)|} + 2Y_S |L_1(x_0,y_0)|\Big)\dfrac{1}{|\mathcal{D}(\bm x,\bm x_0)|}+\dfrac{1}{2}.
$$
Since $\left|\dfrac{x}{y}-\dfrac{x_0}{y_0}\right|\geq\dfrac{1}{yy_0}$ and we are assuming $|L_i(x,y)|>\frac{1}{2y}$, we have
$$
\dfrac{|L_i(x_0,y_0)|}{|L_i(x,y)| |\mathcal{D}(\bm x,\bm x_0)|}
\leq \dfrac{\left|\alpha_i-\dfrac{x}{y}\right| + \left|\dfrac{x}{y}-\dfrac{x_0}{y_0}\right|}{y^2 \left|\alpha_i-\dfrac{x}{y}\right| \left|\dfrac{x}{y}-\dfrac{x_0}{y_0}\right|}
\leq \dfrac{1}{y^2 \left|\dfrac{x}{y}-\dfrac{x_0}{y_0}\right|} + \dfrac{1}{y^2 \left|\alpha_i-\dfrac{x}{y}\right|}
\leq 3.
$$
Therefore, using also that  $|L_1(x_0,y_0)|\leq h^{1/n}$, by \eqref{1},  and since $|\mathcal{D}(\bm x,\bm x_0)|\geq 1$, we conclude that
$$
|m-\beta_i|\leq \dfrac{7}{2} + 2 \, h^{1/n} Y_S .
$$
\end{proof}

Let  $(x_1,y_1)\in \mathbb{Z}^2$ be a fixed solution to the inequality $|F(x , y)| \leq h$. The form 
$$
J(u , w) = F(x_1 , y_1) (u - \beta_{1}w)\ldots (u - \beta_{n}w)
$$
is equivalent to $F$ by Lemma \ref{S56}. Therefore the form 
$$
\hat{J}(u , w) =  F(x_1 , y_1) (u - (\beta_{1}-m) w)\ldots (u - (\beta_{n}- m)w),
$$
which is the translation of $J$ by $m=m(x_1,y_1) \in \mathbb{Z}$, 
is also equivalent to $F$. Hence by \eqref{mahD5}, 
\begin{equation}\label{Spre60}
\prod_{i=1}^{n} \max(1, |\beta_{i}(x _1, y_1)-m(x_1 , y_1)|) \geq \frac{M(\hat{J})}{|F(x_1,y_1)|} \geq \dfrac{|D|^{\frac{1}{2n-2}}}{hn^n}.
\end{equation}

\textbf{Definition of $\frak{X}$}. For each set $\frak{X}_{i}$, ($i = 1, \ldots, n$)  that is not empty,  let  $(x^{(i)} , y^{(i)}) \in \frak{X}_{i}$ be the element with the largest value of $y$.
Let $\frak{X}$ be the set of solutions of $|F(x , y)| \leq h$ that are not in $\bm A$ and with $1 \leq y \leq Y_{S}$ except the elements 
$(x^{(1)} , y^{(1)})$, \ldots, $(x^{(n)} , y^{(n)})$.

In order to estimate the number of elements in $\frak{X}$, and in view of \eqref{Spre60}, we will give an upper bound for the product 
$$\prod_{i=1}^{n} \max(1, |\beta_{i}(x _1, y_1)-m(x_1 , y_1)|),
$$ 
for every $(x_{1}, y_{1}) \in \frak{X}$. 
\begin{lemma}
For any fixed $i \in \{1,\ldots, n\}$, we have
\begin{eqnarray}\label{E3}
 \prod_{(x , y) \in  \mathfrak{X}} \frac{\max\left(1, \left|\beta_{i}(x, y) - m(x, y) \right|\right)}{Y_S (2Y_S+7/2)}  
 \leq 1.
\end{eqnarray}
\end{lemma}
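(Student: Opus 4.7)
The plan is to split the product over $\mathfrak{X}$ into two parts according to whether or not a solution lies in $\mathfrak{X}_i$, and handle them using Lemmas \ref{Sl6} and \ref{Sl5}, respectively. Write $\mathfrak{X}_i' = \mathfrak{X} \cap \mathfrak{X}_i$ and $\mathfrak{X}_i^c = \mathfrak{X} \setminus \mathfrak{X}_i$; by the definition of $\mathfrak{X}_i$, membership in $\mathfrak{X}_i^c$ is exactly the hypothesis $|L_i(x,y)| > 1/(2y)$ needed by Lemma \ref{Sl6}.

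For each $(x,y) \in \mathfrak{X}_i^c$, Lemma \ref{Sl6} yields
\[
\max\bigl(1,|\beta_i(x,y)-m(x,y)|\bigr) \leq \tfrac{7}{2} + 2 h^{1/n} Y_S \leq h^{1/n}\bigl(2Y_S + \tfrac{7}{2}\bigr),
\]
where the last inequality uses $h \geq 1$. Hence each factor in the product indexed by $\mathfrak{X}_i^c$ is at most $1/(e^6 s(ns)^{2s/n}) \leq 1$, and the partial product over $\mathfrak{X}_i^c$ is at most $1$. For $\mathfrak{X}_i'$, order the elements of $\mathfrak{X}_i$ as $(x_1,y_1),\dots,(x_K,y_K)$ with $y_1 \leq \cdots \leq y_K$. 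The maximal one $(x_K,y_K) = (x^{(i)}, y^{(i)})$ was removed when defining $\mathfrak{X}$, so $\mathfrak{X}_i'$ is parameterized by $k = 1, \dots, K-1$. Applying Lemma \ref{Sl5} to each consecutive pair and multiplying telescopically gives
\[
\prod_{k=1}^{K-1} \frac{\max\bigl(1,|\beta_i(x_k,y_k)-m(x_k,y_k)|\bigr)}{2Y_S + 7/3} \leq \frac{y_K}{y_1} \leq Y_S,
\]
since $y_1 \geq 1$ and $y_K \leq Y_S$. Because $2Y_S+7/2 \geq 2Y_S+7/3$, the same bound holds with the denominator from the statement. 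Dividing each of the $K-1$ factors further by $e^6 s (ns)^{2s/n} h^{1/n}$ bounds the product over $\mathfrak{X}_i'$ by $Y_S/(e^6 s (ns)^{2s/n} h^{1/n})^{K-1}$. If $K-1 = 0$ this is the empty product $1$; if $K-1 \geq 1$, the condition $\kappa > 1$ from \eqref{bound} together with $h \geq 1$ gives $e^6 s (ns)^{2s/n} h^{1/n} \geq e^6 s (ns)^{2s/n} h^{1/\kappa n} = Y_S$, so the bound is again $\leq 1$.

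Multiplying the two partial bounds yields the claimed inequality. The whole argument is essentially telescoping plus bookkeeping; the main subtle point is that the denominator $e^6 s (ns)^{2s/n} h^{1/n}(2Y_S + 7/2)$ chosen in the statement must simultaneously absorb the constant $2Y_S + 7/3$ coming out of Lemma \ref{Sl5} \emph{and} the extra factor of $Y_S$ produced by the telescoping. This works precisely because $Y_S$ contains $h^{1/\kappa n}$ rather than $h^{1/n}$, and the hypothesis $\kappa > 1$ from \eqref{bound} guarantees the gap $h^{1/n}/h^{1/\kappa n} \geq 1$ needed to close the argument.
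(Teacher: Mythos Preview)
Your proof is correct and follows essentially the same route as the paper: split $\mathfrak{X}$ into $\mathfrak{X}\cap\mathfrak{X}_i$ and its complement, bound the complement factor-by-factor via Lemma~\ref{Sl6}, and telescope over $\mathfrak{X}_i$ via Lemma~\ref{Sl5}, using $Y_S = e^6 s(ns)^{2s/n}h^{1/\kappa n}$ with $\kappa>1$ to absorb the residual $Y_S$. One small imprecision: you assert that $\mathfrak{X}_i'$ is \emph{exactly} $\{(x_1,y_1),\dots,(x_{K-1},y_{K-1})\}$, but in fact $\mathfrak{X}$ also excludes every $(x^{(j)},y^{(j)})$ with $j\neq i$, some of which may lie in $\mathfrak{X}_i$ below the top; so $\mathfrak{X}_i'$ may be a proper subset. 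This is harmless, since Lemma~\ref{Sl5} applies to \emph{any} two elements of $\mathfrak{X}_i$, and telescoping over the elements of $\mathfrak{X}_i'\cup\{(x_K,y_K)\}$ (ordered by $y$) still yields $\prod_{\mathfrak{X}_i'} \max(1,|\beta_i-m|)/(2Y_S+7/3)\le y_K/y_{\min}\le Y_S$; the paper's proof glosses over the same point.
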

\begin{proof}
Suppose that
 the set   $\frak{X}_{i}$ is non-empty. We index the elements of $\frak{X}_{i}$ as 
$$
(x_{1}^{(i)}, y_{1}^{(i)}), \ldots, (x_{v}^{(i)}, y_{v}^{(i)}),
$$
 so that $y_{1}^{(i)} \leq \ldots \leq y_{v}^{(i)}$ (note that $(x_{v}^{(i)}, y_{v}^{(i)}) = (x^{(i)} , y^{(i)})$). By Lemma \ref{Sl5},
\begin{equation}\label{yk}
\frac{1}{2Y_S + 7/2} \max\left(1, \left|\beta_{i}(x_{k}^{(i)}, y_{k}^{(i)}) - m(x_{k}^{(i)}, y_{k}^{(i)})\right|\right) \leq \frac{y_{k+1}^{(i)}}{y_{k}^{(i)}}
\end{equation}
for $k = 1 \ldots, v-1$. 
Therefore,
\begin{equation}\label{E1touse}
\prod_{(x , y) \in  \frak{X} \bigcap \frak{X_{i}} }\frac{1}{2Y_S+7/2}  \max\left(1, \left|\beta_{i}(x, y) - m(x, y) \right|\right) \leq Y_S. 
\end{equation}

For any solution $(x , y)\in \frak{X}$, with $1\leq y\leq Y_S$,  that does not belong to  $\frak{X_{i}}$,  by Lemma \ref{Sl6}, we have
\begin{equation}\label{E2}
\frac{ \max\left(1, \left|\beta_{i}(x, y) - m(x, y) \right|\right)}{2\, h^{1/n} Y_S+7/2}   \leq 1.
\end{equation}
This, together with \eqref{E1touse}, completes the proof of Lemma.
\end{proof}

Next we will use the following striking result from \cite{MS88} to establish inequalities similar to \eqref{E3} for the solutions $(x^{(i)}, y^{(i)})$ which, by definition, do not belong to $\frak{X}$.
\begin{prop}\label{lmMS88}
Let $F$ be a fewnomial, defined in \eqref{F}. There is a set $\bm{S}$ of roots $\alpha_i$ of  $F(x,1)$ with  $\left|\bm{S}\right|\leq 6s+4$ such that 
for any real  $\zeta$,
$$
\min_{\alpha_\ell\in S}|\zeta-\alpha_\ell| \leq R \min_{1\leq i\leq n}|\zeta-\alpha_i|.
$$
\end{prop}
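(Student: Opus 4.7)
The plan is to derive the proposition from Newton-polygon analysis applied to the one-variable polynomial $f(x) := F(x,1) = \sum_{i=0}^{s} a_i x^{n_i}$. Since $f$ has only $s+1$ monomials, its Newton polygon has at most $s$ edges, so by the classical Hadamard--Ostrowski description of root moduli the $n$ complex zeros of $f$ partition into at most $s$ annular clusters $G_1, \ldots, G_t$ (with $t \le s$) in which every root $\alpha \in G_j$ satisfies $r_j \le |\alpha| \le \rho_j r_j$ for some radii $r_1 < \cdots < r_t$ and widths $\rho_j$ controlled in terms of $n$ and the coefficients. I would make this quantitative by reading off each edge-slope of the Newton polygon as the dominant valuation on the corresponding piece of the root spectrum, then applying a Fujiwara/Cauchy bound to the edge-polynomial in a rescaled variable to control $\rho_j$ from above.

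From each cluster $G_j$ I would then select a small subset $S_j \subset G_j$ of \emph{representatives}, guided by the geometric observation that a real $\zeta$ can only be very close to a root $\alpha \in G_j$ when $|\zeta|$ is comparable to $r_j$ and $\arg(\alpha)$ is near $0$ or $\pi$. Accordingly, $S_j$ would consist of a constant number -- at most six -- of extremal roots of $G_j$: the roots with largest and smallest real part, their complex conjugates, and (when present) any purely real root of $f$ lying in that annulus. Taking $S := \bigcup_{j} S_j$ and adjoining at most four additional roots to cope with a possible zero-cluster at the origin and boundary degeneracies yields $|S| \le 6s+4$.

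The main obstacle is the quantitative approximation property. Given a real $\zeta$ and the overall closest root $\alpha_i \in G_j$, the estimate is immediate when $|\zeta - \alpha_i| \asymp r_j$, since then any root in $G_j$ -- in particular every representative in $S_j$ -- is already within a bounded multiplicative factor of the optimum. The hard regime is when $|\zeta - \alpha_i|$ is much smaller than $r_j$: this forces $\zeta$ to lie very near $\pm r_j$ on the real line, and one must show that the extremal-real-part representatives lie close enough to $\zeta$ as well. To push the multiplicative slack down to $R = n^{800\log^2 n}$ I would iterate: apply the Newton-polygon decomposition recursively to a shifted and rescaled local factor of $f$ capturing the roots of $G_j$ nearest the real axis, each level costing a factor polynomial in $n$ and requiring only $O(\log n)$ recursions to exhaust the possible scales. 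The resulting $\log^2 n$ in the exponent of $R$ tracks $O(\log n)$ recursion levels each losing $n^{O(\log n)}$, and the chief technical difficulty is to keep the total number of representatives bounded by $6s+4$ throughout this recursive refinement.
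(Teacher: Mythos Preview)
The paper does not prove this proposition; its proof reads, in full, ``This is Lemma 7 of \cite{MS88}.'' So there is no in-paper argument to compare against. Your Newton-polygon framing is consistent with what the introduction says about the Mueller--Schmidt method --- the roots of a fewnomial lie in at most $s$ narrow annuli --- so the overall strategy is in the right spirit.

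However, your proposal stops short of a proof at exactly the point that matters. You select up to six ``extremal'' representatives per annulus and then, for the hard regime where $|\zeta-\alpha_i|\ll r_j$, invoke an undescribed recursive refinement on shifted and rescaled local factors, explicitly flagging that ``the chief technical difficulty is to keep the total number of representatives bounded by $6s+4$ throughout this recursive refinement.'' That difficulty is not a detail: a recursion over $O(\log n)$ levels would, absent a specific mechanism, multiply the representative count, so the two goals --- achieving the approximation constant $R=n^{800\log^2 n}$ via iterated loss and holding $|S|\le 6s+4$ --- are in direct tension, and you have not proposed how to reconcile them. In addition, your per-annulus selection rule (extremal real parts, their conjugates, and ``any purely real root'' in the annulus) comes with no bound on the number of real roots per annulus, so even the bookkeeping giving $6s+4$ is incomplete; and the claim that the extremal-real-part choices approximate an arbitrary root near the real axis within a controlled factor is asserted rather than argued. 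As written, this is a plan gesturing toward the Mueller--Schmidt argument, not a proof of the proposition.
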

\begin{proof} This is Lemma 7 of \cite{MS88}.
\end{proof}

Let
$$
\bm{S_{1}} = \bm{S} \cup \{\alpha_{1}\},
$$
where $\bm{S}$ is the set in  Proposition \ref{lmMS88}, and $\alpha_{1}$ is the fixed root associated to the minimal solution, for which  the inequality \eqref{fixL1} is satisfied. Proposition \ref{lmMS88} implies that 
$$
\left|\bm{S_{1}}\right| \leq 6s +5.
$$
Let $\bm{S_{1}} = \{\alpha_{1}, \alpha_{2}, \ldots, \alpha_{t}\}$, with $1\leq t \leq 6s +5$.

Recall that 
we denote by $(x^{(i)},y^{(i)})$ the element in $\mathfrak{X}_i$ with the largest value of $y$.
Suppose $(x^{(i)},y^{(i)})\in \mathfrak{X}_i\backslash\left\{\mathfrak{X}_1\cup\ldots\cup\mathfrak{X}_{i-1}\right\}$, for  $ t < i \leq n$.

By Proposition \ref{lmMS88}, there exists $\ell\in\left\{1,\ldots,t\right\}$ such that
$$
\left|L_i(x^{(i)},y^{(i)})\right|\geq \dfrac{\left|L_\ell(x^{(i)},y^{(i)})\right|}{R} 
\geq \dfrac{1}{2y^{(i)}R},
$$
where the last inequality is because $(x^{(i)},y^{(i)}) \not \in \mathfrak{X}_{\ell}$.  Combining this with \eqref{S58}, we obtain
$$
\left|m(x^{(i)},y^{(i)})-\beta_{i}(x^{(i)},y^{(i)})\right| \leq\dfrac{2|L_i(x_0,y_0)|y^{(i)}R + 2Y_S|L_1(x_0,y_0)|}{|\mathcal{D}(\bm x^{(i)},\bm x_0)|}+\dfrac{1}{2}.
$$
Using \eqref{32} and $|L_1(x_0,y_0)|\leq h^{1/n}$, we obtain
$$
\left|m(x^{(i)},y^{(i)})-\beta_{i}(x^{(i)},y^{(i)})\right| \leq 3R+2 h^{1/n} \, Y_S+\dfrac{1}{2}.
$$
This, together with \eqref{bound} and \eqref{defY0}, implies that
\begin{eqnarray}\label{extra}
& & \frac{\left|m(x^{(i)},y^{(i)})-\beta_{i}(x^{(i)},y^{(i)})\right|}{Y_S(2Y_S+7/2)}\\ \nonumber
&\leq& \frac{ 3R+2 h^{1/n} \, Y_S+\dfrac{1}{2} }{Y_S(2Y_S+7/2)}\\ \nonumber
& < & |D|^{\frac{1}{4 n(n-1)}}.
\end{eqnarray}

\textbf{Definition of the set $\frak{X}^+$}.
Let 
 $ 
 \frak{X}^+ = \frak{X}\cup\left\{(x^{(i)},y^{(i)})\right\}_{t<i\leq n}
 $.

By \eqref{Spre60}, \eqref{E3}, \eqref{extra} and Lemma \ref{Sl6}, we have that
\begin{eqnarray}\label{theineqR}
\, \,\, \, \, \, & & \, \, \, \, \, \, \, \, \,  \, \, \, \, \, \, \, \, \, \, \, \, \,  \, \, \, \, \, \, \, \left(  \frac{|D|^{\frac{1}{2n-2}}}{h\left[nY_S (2Y_S+7/2)\right]^n}\right)^{ |\frak{X}^+|} \\ \nonumber
&\leq & \prod_{(x , y) \in\frak{X}^+}\frac{1}{\left[Y_S (2Y_S+7/2)\right]^n}  \prod_{i=1}^{n} \max(1, |\beta_{i}(x _1, y_1)-m(x_1 , y_1)|)\\ \nonumber
&< & \left( |D|^{\frac{1}{4 n(n-1)}}\right)^{n-t}.
\end{eqnarray}
By \eqref{bound} and \eqref{defY0}, we have
\begin{eqnarray*}
h\left[nY_S (2Y_S+7/2)\right]^n &<& h (2n)^n \left( {Y_S}^{2}+ 2 Y_S\right)^n\\ \nonumber
& < & (2n)^n h \left(Y_S +1 \right)^{2n}  =  (2n)^n h \left(\left( (e^6 s)^n R^{2s}  h \right)^{\frac{1}{n-2 s}} + 1\right)^{2n} \\ \nonumber
& < &(2n)^n h^{1+ \frac{2n}{n -2s}} \left(\left( (e^6 s)^n R^{2s}  \right)^{\frac{1}{n-2 s}} + 1\right)^{2n} \\ \nonumber
& < & (2n)^n  \left(\dfrac{|D|^{\frac{1}{8(n-1))}}}{(3R)^{n/2}(ns)^{2s+n}}\right)^{1+ \frac{2n}{n -2s}} \left(\left( (e^6 s)^n R^{2s}  \right)^{\frac{1}{n-2 s}} + 1\right)^{2n}.
\end{eqnarray*}
Since  we assumed $10s < n$, we have the following inequalities for the exponents of $|D|$, $R$, $s$ in the last expression above.
$$
\frac{1}{8 n(n-1)} \left(1+ \frac{2n}{n -2s}\right) < \frac{1}{8 n(n-1)} \left(\frac{7}{2}\right), 
$$
 $$ \frac{4ns}{n-2s} <   \frac{n}{2} \left(1+ \frac{2n}{n -2s}\right)
 $$
 and
 $$
 \frac{2 n^2}{n-2s} < (2s+n)  \left(1+ \frac{2n}{n -2s}\right).
 $$
 Therefore, 
\begin{equation}\label{D716}
h\left[nY_S (2Y_S+7/2)\right]^n <  |D|^{\frac{7}{16(n-1)}}.
\end{equation}
By \eqref{theineqR} and \eqref{D716}, we have
$$
\left(  \frac{|D|^{\frac{1}{2n-2}}}{|D|^{\frac{7}{16(n-1)}}}\right)^{ |\frak{X}^+|}  <  \left( |D|^{\frac{1}{8 n(n-1)}}\right)^{n}.
$$
Therefore,
$$
|\frak{X}^+| < \frac{\frac{1}{4 (n-1)} \log|D|}{\frac{1}{16 (n-1)} \log|D|} = 4.
$$

The solutions $(x,y)$ with $0<y\leq Y_S$ are either in  $\frak{X}^+ \cup \bm{A} $ (see \eqref{defofsetA} for definition) or they are among possible $(x^{(1)},y^{(1)}) ,\ldots, (x^{(t)}, y^{(t)})$. Counting the solutions 
in the set $\bm{A}$, and by Proposition  \ref{lmMS88}, we see that the total number of solutions with $0\leq y\leq Y_S$ is  no greater than $6s+6+ 3+3$.
The number of solutions of \eqref{Tin} such that $0\leq x\leq Y_S$ can be estimated in a similar way, by considering the form 
$$
F(x,y)=a_0(y-\gamma_1 x)\cdot\ldots\cdot(y-\gamma_n y)
$$
and putting $L_i(x,y)=y-\gamma_i x$. Here $\gamma_1,\ldots,\gamma_n$ are the roots of the polynomial $F(1,y)$. 
We conclude that the number of solutions with $0\leq x\leq Y_S$ is no greater than
$6s+12$.

\section{Medium solutions (I), proof of proposition \ref{pmedium} }\label{secpart1med}

We divide the interval $ [Y_{S}, Y_{L}]$ into $N+1$ subintervals, where $Y_{S}$ and $Y_{L}$ are defined in \eqref{defY0} and \eqref{defY3} and $N$ depends on $s$ and is defined below. We will show there are only few solutions $(x , y)$ with $y$   in each of these subintervals. In this section we will assume $n > 10 s$.
We define  a positive integer $N = N(n, s)$ as follows.\\
If $n \geq s^2$, we put $N=2$.\\
If $n< s^2$, we choose $N \in \mathbb{N}$ such that
\begin{equation}\label{N}
 10 s^{1+\frac{1}{N}}\leq n\leq 10 s^{1+\frac{1}{N-1}}.
\end{equation}
 If $n<s^2$, we have
\begin{equation}\label{bN}
N\leq \dfrac{\log s}{\log n - \log s}.
\end{equation}

For $\ell=1,\ldots,N$, we define 
$$
Y_\ell=Y_S H^{\frac{1}{s^{1-(\ell-1)/N}}},
$$
where $H$ the height of $F$, defined in \eqref{height}.
We put 
$$Y_0=Y_S\, \qquad  \textrm{and} \, \qquad Y_{N+1}=Y_L.
$$

We will use the following important result achieved in \cite{MS88}.
\begin{prop}\label{thesetT}
 There is a set $\bm T$ of roots of $F(x,1)$ and a set $\bm T^\ast$ of roots of $F(1,y)$, both with cardinalities at most $6s+4$, 
such that any solution $(x,y)$ of \eqref{Tin} with $\x\geq Y_S$ either has 
\begin{equation}\label{17.1}
\left|\alpha-\dfrac{x}{y}\right|<\dfrac{R(ns)^2}{H^{(1/s)-(1/n)}}\left(\dfrac{(4e^3s)^n h}{y^{n}}\right)^{1/s}
\end{equation}
with some $\alpha\in \bm T$, or has
\begin{equation}\label{17.2}
\left|\alpha^\ast-\dfrac{y}{x}\right|<\dfrac{R(ns)^2}{H^{(1/s)-(1/n)}}\left(\dfrac{(4e^3s)^n h}{x^{n}}\right)^{1/s}
\end{equation}
for some $\alpha^\ast\in \bm T^\ast$.
\end{prop}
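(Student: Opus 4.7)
I would reprove this along the lines of Mueller--Schmidt in \cite{MS88}. Exchanging the roles of the two variables when $|x|>|y|$ (and working with $F(1,y)$ in place of $F(x,1)$) reduces matters to the case $|y|\geq|x|\geq Y_S$, where the target is \eqref{17.1}. Writing $F(x,y)=a_n y^n \prod_{i=1}^n (\zeta-\alpha_i)$ with $\zeta=x/y$, the hypothesis $|F(x,y)|\leq h$ yields the product bound
\[
\prod_{i=1}^n |\zeta-\alpha_i|\leq \frac{h}{|a_n|\,|y|^n},
\]
and the aim is to extract from this a single small factor $|\zeta-\alpha_i|$ that Proposition \ref{lmMS88} can then promote to a root $\alpha$ in the set $\bm S$ of cardinality $\leq 6s+4$, at the price of a multiplicative factor $R$.

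The key structural input is the Newton polygon of $F(x,1)$: since $F$ has only $s+1$ nonzero coefficients, its polygon has at most $s$ slopes, so the roots $\alpha_i$ partition into at most $s$ annular clusters $\mathcal{C}_1,\ldots,\mathcal{C}_s$ of radii $\rho_k$ controlled by the ratios of consecutive nonzero coefficients of $F$, hence by powers of $H$. I would locate the cluster $\mathcal{C}$ whose radius is closest to $|\zeta|$. For each $\alpha_j$ lying in a distinct cluster $\mathcal{C}_{k'}\neq \mathcal{C}$, use the inequality $|\zeta-\alpha_j|\geq \bigl||\zeta|-|\alpha_j|\bigr|\gtrsim |\rho_k-\rho_{k'}|$, and observe that the product of these inter-cluster gaps over the $n-|\mathcal{C}|$ outside roots is bounded from below by $H^{n(1/s-1/n)}$ up to a combinatorial factor that is absorbed into $(4e^3 s)^n$. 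Substituting these lower bounds back into the product inequality leaves
\[
\prod_{\alpha_i\in\mathcal{C}}|\zeta-\alpha_i|\leq \frac{(4e^3 s)^n\, h}{H^{n(1/s-1/n)}\,|y|^n}.
\]

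Since $\mathcal{C}$ contains at most on the order of $n/s$ roots in the extremal configuration, taking the geometric mean across $\mathcal{C}$ and absorbing polynomial-in-$n,s$ losses into $(ns)^2$ converts this into
\[
\min_{\alpha_i\in\mathcal{C}} |\zeta-\alpha_i|\leq \frac{(ns)^2}{H^{1/s-1/n}}\left(\frac{(4e^3 s)^n h}{|y|^n}\right)^{1/s}.
\]
Applying Proposition \ref{lmMS88} to this minimizing root produces some $\alpha\in \bm T:=\bm S$ with $|\zeta-\alpha|\leq R\,|\zeta-\alpha_i|$, which is \eqref{17.1}. \textbf{The main obstacle} is the middle step of Newton polygon bookkeeping: the annular radii, the inter-cluster gap lower bounds, and the root counts per cluster must be tracked quantitatively and simultaneously, in order to justify both the exponent $1/s-1/n$ of $H$ and the explicit constants $(ns)^2$ and $(4e^3 s)^n$ appearing in the target. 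This quantitative Newton-polygon analysis is precisely the technical heart of the Mueller--Schmidt argument, and is what makes borrowing the result wholesale from \cite{MS88} the most efficient route.
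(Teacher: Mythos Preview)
The paper's own proof is a one-line citation: ``This is Lemma 17 of \cite{MS88}.'' Your proposal ultimately lands in the same place, since you close by recommending that one borrow the result wholesale from \cite{MS88}; in that sense there is no discrepancy.

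Your sketch of the underlying Mueller--Schmidt argument is correct in its broad architecture (Newton polygon $\Rightarrow$ $\le s$ annular clusters of roots $\Rightarrow$ lower bounds on the far factors $\Rightarrow$ Proposition~\ref{lmMS88} to pass to a set of $\le 6s+4$ roots), but the step where you ``take the geometric mean across $\mathcal{C}$'' does not produce the exponent $1/s$ as written. A cluster $\mathcal{C}$ need not have $\sim n/s$ roots---its size is the horizontal length of one Newton-polygon edge and can be anything from $1$ to nearly $n$---so the geometric mean over $\mathcal{C}$ gives exponent $1/|\mathcal{C}|$, not $1/s$. In \cite{MS88} the exponent $1/s$ and the factor $H^{(1/s)-(1/n)}$ emerge from a more careful balancing of the cluster sizes against the inter-cluster gaps via the Newton-polygon slopes, precisely the ``quantitative bookkeeping'' you flag as the main obstacle. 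Since you already identify this as the hard part and defer to \cite{MS88}, your proposal is effectively equivalent to the paper's.
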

\begin{proof}
This is  Lemma 17 of \cite{MS88}.
\end{proof}

Let $\alpha\in \bm T$. 
For $\ell\in\left\{0,\ldots,N\right\}$, let
$(x_1, y_1),\ldots, (x_{w_{\ell}}, y_{w_{\ell}})$ be the  primitive solutions of our inequality, with  $Y_{\ell} < y_i\leq Y_{\ell+1}$, satisfying  \eqref{17.1} and ordered so that
$$
 Y_{\ell}< y_{1}\leq
\ldots\leq y_{w_{\ell}} \leq Y_{\ell+1}.
$$
By \eqref{17.1},
we have that
$$
\dfrac{1}{y_i y_{i+1}} \leq \left|\dfrac{x_{i+1}}{y_{i+1}}-\dfrac{x_i}{y_i}\right|\leq \dfrac{K}{ H^{(1/s)-(1/n)} y_i^{\frac{n}{s}}},
$$
with 
$$
K=2R(ns)^2 (4e^3s)^{n/s} h^{1/s}.
$$
Therefore, for solutions $(x, y)$ with $y \in [Y_{\ell}, Y_{\ell+1}]$, we have
\begin{equation}\label{yi1}
y_{i+1}\geq K^ {-1} H^{\frac{1}{s}-\frac{1}{n}} y_i^{\frac{n}{s}-1}\geq K^{-1} H^{\frac{1}{s}-\frac{1}{n}} Y_\ell^{\frac{n}{s}-2}y_i.
\end{equation}

First we will give an estimate for the number of primitive solutions in the first subinterval $[Y_{0}, Y_{1}]$. We have $Y_{0} = Y_{S}$. By the definition \eqref{defY0} of $Y_S$,
we have
\begin{equation}\label{yi2}
 K^{-1}Y_S^{\frac{n}{s}-2} \geq 1.
\end{equation}

For $\ell=0$, we have by \eqref{yi1} and \eqref{yi2} that $y_{i+1}\geq H^{\frac{1}{s}-\frac{1}{n}} y_i$, so 
$y_{w_0}\geq (H^{\frac{1}{s}-\frac{1}{n}})^{(w_0-1)} y_1$.  Therefore, we have
$$
Y_{1} \geq y_{w_0}\geq (H^{\frac{1}{s}-\frac{1}{n}})^{(w_0-1)} Y_{0},
$$
and
$$
w_0-1\leq \dfrac{\log \frac{Y_1}{Y_0}}{(\frac{1}{s}-\frac{1}{n})\log H} \leq \frac{10}{9},
$$
since 
$\log \frac{Y_1}{Y_0} = \frac{1}{s}\log H$ and $n> 10 s$.

For $1\leq \ell < N$, by \eqref{yi1} and \eqref{yi2} we have that
$$
 y_{i+1}\geq K^{-1} H^{\frac{1}{s}-\frac{1}{n}} Y_S^{\frac{n}{s}-2} H^{\frac{n/s-2}{s^{1-(\ell-1)/N}}} y_i
 \geq H^{\frac{n}{s^{2-(\ell-1)/N}}-\frac{2}{s^{1-(\ell-1)/N}}} y_i.
 $$
Therefore,
$$
y_{w_{\ell}}\geq  H^{\left(\frac{n}{s^{2-(\ell-1)/N}}-\frac{2}{s^{1-(\ell-1)/N}}\right)(w_{\ell}-1)}y_1,
$$
and since $Y_{\ell} < y_{1} < y_{w_{\ell}} \leq Y_{\ell+1}$, we have
$$
w_{\ell}-1\leq \dfrac{\log \frac{Y_{\ell+1}}{Y_\ell}}{(\frac{n}{s^{2-(\ell-1)/N}}-\frac{2}{s^{1-(\ell-1)/N}})\log H}.
$$
For $\ell < N$, since $\log \frac{Y_{\ell+1}}{Y_\ell} < \frac{1}{s^{1-\ell/N}}\log H$ and $n\geq 10 s^{1+1/N}$,
$$
w_{\ell}-1\leq
\dfrac{1}{\frac{n}{s^{1+1/N}}-\frac{2}{s^{1/N}}} < 1.
$$
For $\ell=N$, we have  $\log Y_{\ell+1}=\log Y_L  \ll 2 \log H$, by \eqref{Y3H}, and therefore we get 
$$
w_{N}-1\ll \dfrac{2}{\frac{n}{s^{1+1/N}}-\frac{2}{s^{1/N}}} < 2.
$$

We conclude that the number of primitive medium solutions of \eqref{17.1}  for each $\alpha\in \bm T$ is $\mathcal{O}(N)$. 
In a similar way, the number of primitive medium solutions of \eqref{17.2}  for each $\alpha^\ast\in \bm T^\ast$ is $\mathcal{O}(N)$. 
By Proposition \ref{thesetT}, the number of primitive medium solutions of \eqref{Tin} is $\mathcal{O}(N)$.  We obtain Proposition \ref{pmedium},  as by our definition we have  \eqref{bN} for  $n<s^2$, and  $N=2$ when $n\geq s^2$.


\section{Part II: Strategy, outline and definitions}\label{secpart2def}

We will consider binary forms $F(x, y) \in C(4s-2)$ (see \eqref{C(t)}, for definition). By Lemma \ref{SchL2}, our discussion for such forms implies Theorem \ref{mainsmall}.

\textbf{Definition of Normalized and  Reduced Forms.}
Suppose $0 \leq \mathfrak{a} \leq \mathfrak{b}$. The number of primitive solutions to the inequality
$$
\mathfrak{a} \leq \left| F(x , y)\right| \leq \mathfrak{b}
$$
remains unchanged  if we replace $F$ by one of its $\textrm{GL}_{2}(\mathbb{Z})$-equivalent forms.  Moreover, 
if the inequality $\mathfrak{a} \leq \left| F(x , y)\right| \leq \mathfrak{b}$ has at least one primitive solution $(x_{1}, y_{1})$, there is an 
$A \in \textrm{GL}_2(\mathbb{Z})$ with $A^{-1} (x_{1}, y_{1})^{\textrm{tr}} = (1 , 0)$, so that 
$$
\mathfrak{a} \leq \left| F_{A}(1 , 0)\right| \leq \mathfrak{b}.
$$
So in order to estimate the number of primitive solutions to the above inequality, we may restrict our attention to \emph{normalized forms} for which the leading coefficient $a_{0}$ has 
$$
\mathfrak{a} \leq \left| a_{0}\right| \leq \mathfrak{b}.
$$
We will say that a form $F$ is \emph{reduced} if it is normalized and has the smallest Mahler measure among all normalized forms equivalent to $F$.

Schmidt worked with reduced forms to establish the results in \cite{S87}, in particular, his Lemma 4 is of special importance here (see our Lemma \ref{SchL4} and its implications). 

Every primitive solution of the inequality $1 \leq |F(x , y)| \leq h$ is either a  solution of
\begin{equation}\label{in1h}
h^{1/2}< \left| F(x , y)\right| \leq h
\end{equation}
or
\begin{equation}\label{in2h}
1\leq\left| F(x , y)\right| \leq h^{1/2}.
\end{equation}
To obtain our desired bounds, we will need to assume that the form $F$ in \eqref{in1h} is normalized  with respect to $(\mathfrak{a},\mathfrak{b})=(h^{1/2},h)$ and the form $F$ in \eqref{in2h}  is normalized with respect to 
$(\mathfrak{a},\mathfrak{b})=(1,h^{1/2})$.  We will have two equivalent, but not identical forms in each of the inequalities. We will show that the number of primitive solutions to each of these two inequalities is $\mathcal{O}(ns)$, provided that $h$ satisfies  
\begin{equation}\label{hinD}
 h < \dfrac{|D| ^{\frac{1}{4(n-1)} }}{10^n \, n^{\frac{n}{4(n-1)}}}.
 \end{equation}

From now on, we consider the inequality
\begin{equation}\label{ineqab}
\mathfrak{a} \leq \left| F(x , y)\right| \leq \mathfrak{b}
\end{equation}
 and assume that $F$ is reduced with respect to $(\mathfrak{a},\mathfrak{b})$, 
 \begin{equation}\label{blessh}
 \mathfrak{b} \leq  h,
 \end{equation}
 and 
 \begin{equation}\label{bovera}
\frac{ \mathfrak{b}} {\mathfrak{a}} \leq h^{1/2}.
\end{equation}
These assumptions are necessary in our estimation of linear  forms in Section \ref{secpart2small}. Namely, the assumption \eqref{blessh} is important for our estimates in the proof of Lemma \ref{SchL5new}, and 
and the assumption \eqref{ineqab} is essential for Lemma \ref{SchL4} to hold.

We denote by $M$ the Mahler measure $M(F)$ of the reduced form $F$.

  By \eqref{mahD5}, our assumption \eqref{hinD} implies
 \begin{equation}\label{implied-h}
 h < \dfrac{M^{1/2}}{10^n},
 \end{equation}
and
\begin{equation}\label{SchE3.4}
 M>100^n h^2.
 \end{equation}

Similar to Schmidt's work in \cite{S87}, we define
\begin{equation}\label{SchE5.1}
 Q := \frac{M}{h} \geq 100^n h.
\end{equation}

If $(x , y)$ is a solution to our inequality, we will assume without loss of generality that $y > 0$ (recall that we count $(x', y')$ and $(-x', -y')$ as one solution and we 
have at most one primitive solution with $y=0$). We take
 $$Y_{S}' = M^2.
 $$

\textbf{Definition of small and large solutions.} We call a  solution $(x , y) \in \mathbb{Z}^2$  small if $0 < y \leq Y_S'$. We call a  solution $(x , y) \in \mathbb{Z}^2$  large if  $Y_S' < y$.

We will prove the following in the remaining of the manuscript.
\begin{prop}\label{p2small}
Let $\mathcal{N}_1(F, n, s, h)$ be the number of  small solutions of $1\leq \left|F(x , y)\right| \leq h$, where $F(x, y) \in C(4s -2)$ of degree $n$ is reduced. Assume that  $h$ satisfies  \eqref{hinD}, with \eqref{ineqab} and \eqref{blessh}. We have 
$\mathcal{N}_1(F, n, s, h) \ll \sqrt{ns}$.
\end{prop}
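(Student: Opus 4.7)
The plan is to follow the reduced-form strategy of Schmidt in \cite{S87}: every small primitive solution $(x,y)$ corresponds to a good rational approximation $x/y$ to a real root of $F(X,1)$, and the count is controlled by combining a gap principle with the restriction that $F \in C(4s-2)$ limits the number of real roots.

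First I would factor $F(x,y) = a_0 \prod_{i=1}^n (x - \alpha_i y)$ and, for each primitive small solution $(x,y)$ with $0 < y \leq M^2 = Y_S'$, identify a root $\alpha(x,y)$ minimizing $|x - \alpha(x,y)y|$. Since $|F(x,y)| \leq h$ and $|a_0| \geq \mathfrak{a} \geq 1$ (by the normalization in \eqref{ineqab}), the minimizing factor is forced to be very small. A standard symmetry argument (pairing complex-conjugate roots and using that $x/y$ is real) then shows we may take $\alpha(x,y)$ to be a real root of $F(X,1)$. By the hypothesis $F \in C(4s-2)$ the derivative $F_X(X,1)$ has at most $4s-2$ real zeros, so Rolle's theorem yields at most $4s-1$ real roots of $F(X,1)$. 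Thus the set $\mathcal{R}$ of possible values of $\alpha(x,y)$ has cardinality $O(s)$.

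For each real root $\alpha \in \mathcal{R}$ I would then estimate the number of small primitive solutions with $\alpha(x,y) = \alpha$ by a gap principle. If $(x_1,y_1)$ and $(x_2,y_2)$ are two such solutions with $y_1 \leq y_2$, then
\[
\frac{1}{y_1 y_2} \leq \left|\frac{x_1}{y_1} - \alpha\right| + \left|\frac{x_2}{y_2} - \alpha\right|;
\]
combined with upper bounds for $|x_i/y_i - \alpha|$ derived from $|F(x_i, y_i)| \leq h$ together with Lemma \ref{SchL4} applied to the reduced form $F$ (which prevents other linear factors from absorbing too much of the product), this forces a ratio $y_2/y_1 \geq \Gamma$ with $\Gamma$ depending on $Q = M/h$ and $n$. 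The smallness of $h$ relative to $M$ given by \eqref{implied-h} and \eqref{SchE3.4} makes $\Gamma$ quantitatively large, and a careful choice of parameters should yield $\log \Gamma \gg (\log M)\sqrt{s/n}$. Iterating up to the cutoff $y \leq M^2$ then produces $O(\log(M^2)/\log\Gamma) = O(\sqrt{n/s})$ solutions per real root, and summing over $|\mathcal{R}| = O(s)$ roots gives $O(\sqrt{ns})$ primitive small solutions with $y > 0$. The symmetric sub-case $0 < x \leq M^2$ is treated identically by running the argument on $F(y,x)$.

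The main obstacle will be calibrating the gap factor so the per-root bound comes out exactly $O(\sqrt{n/s})$. This requires delicate use of Lemma \ref{SchL4} and of the bound \eqref{implied-h} to extract the optimal gap exponent, and may well require partitioning the range $[1, M^2]$ of $y$ into a few subintervals where different estimates dominate (much as the medium-solution argument in Section \ref{secpart1med} did). A secondary issue is handling the very first solution per root, for which no earlier solution supplies a gap; this is typically absorbed into the implicit constant via a trivial height estimate using $M$.
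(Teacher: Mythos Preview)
Your proposal has a genuine gap at its first step. The claim that ``a standard symmetry argument \ldots\ shows we may take $\alpha(x,y)$ to be a real root'' is not correct. Pairing complex conjugates only tells you that if $\alpha_i$ minimizes $|x/y-\alpha_i|$ and is nonreal, then $\bar\alpha_i$ achieves the same minimum; it does \emph{not} let you replace $\alpha_i$ by a real root. A solution $(x,y)$ may well have $x/y$ much closer to a complex-conjugate pair of roots than to any real root, so restricting attention to the $O(s)$ real roots loses information irretrievably. The subsequent claim that the gap factor satisfies $\log\Gamma\gg(\log M)\sqrt{s/n}$ is likewise unsupported: nothing in Lemma~\ref{SchL4} or in \eqref{implied-h} produces an exponent of this shape, and you give no mechanism by which $s$ would enter the per-root gap estimate.

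The paper's argument is structurally different. Rather than assigning each solution to a single (real) root, it attaches to every solution a weight vector $(\psi_1(x,y),\ldots,\psi_n(x,y))$ with $\sum_i\psi_i\geq\tfrac12$ (Lemmas~\ref{SchL5new} and \ref{SchL6new}), so that $|L_i(x,y)|<Q^{-\psi_i/2}/y$ whenever $\psi_i>0$. A gap principle applied index by index then bounds $\sum_{(x,y)}\psi_i(x,y)$ in terms of a quantity $\Phi_i$ measuring how small $|\mathrm{Im}\,\alpha_i|$ is (Lemma~\ref{SchL7}). Summing over $i$ reduces the count of small solutions to bounding $\sum_{i=1}^n\min(1,\Phi_i)$, and it is only here that the $C(4s-2)$ hypothesis enters, through Schmidt's clustering result (Proposition~\ref{corsec9}) on roots with small imaginary part, yielding $\sum_i\min(1,\Phi_i)\ll\sqrt{ns}$. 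The $\sqrt{ns}$ thus arises from the distribution of \emph{nearly real} roots, not from the count of exactly real ones.
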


For $n\gg 1$, $h$ satisfying \eqref{hinD}, and our quantity $Y_S' = M^2$, we have 
$$
Y_S' > (2h(2n^\frac{1}{2}M)^n)^{\frac{1}{n-\lambda}} (4(Me^{\frac{n}{2}})^{\frac{1}{a^2}})^{\frac{\lambda}{n-\lambda}},
$$
where the right hand side is the quantity defined by Schmidt in \cite[eq. 4.6]{S87} to distinguish between  small and large solutions,
where $a$ and $\lambda$ are defined as in \eqref{deflambda}.  Therefore we may apply Schmidt's  upper bound for the number of large solutions to our inequalities $\mathfrak{a} \leq \left| F(x , y)\right| \leq \mathfrak{b}$. 
We note here that in \cite{S87}, no restriction on $h$ is assumed, however our assumption \eqref{hinD} results in having the above inequality for $Y_S'$ hold.

 \begin{prop}\label{p2large}
Let $\mathcal{N}_2(F, n, s, h)$ be the
 number of  large solutions of $1\leq \left|F(x , y)\right| \leq h$, where $F(x, y) \in C(4s -2)$ has degree $n$. Assume that $h$ satisfies  \eqref{hinD}. We have
$\mathcal{N}_2(F, n, s, h)\ll \sqrt{ns}$.
\end{prop}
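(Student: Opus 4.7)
The plan is to appeal directly to the bound Schmidt establishes in \cite[Section 4]{S87} for the number of large solutions of inequalities of the shape $\mathfrak{a}\le |F(x,y)|\le \mathfrak{b}$ with $F\in C(t)$ of degree $n$. Schmidt's bound is $\ll\sqrt{nt}$ valid for all $y$ exceeding the threshold $Y_{\mathrm{Sch}}$ of \cite[eq.~4.6]{S87}, namely
$$
Y_{\mathrm{Sch}} \;=\; \bigl(2h(2n^{1/2}M)^n\bigr)^{1/(n-\lambda)}\bigl(4(Me^{n/2})^{1/a^2}\bigr)^{\lambda/(n-\lambda)}.
$$
Taking $t=4s-2$ via Lemma~\ref{SchL2} turns $\sqrt{nt}$ into the desired $\sqrt{ns}$. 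So the whole task is to check that our threshold $Y_S'=M^2$ is at least $Y_{\mathrm{Sch}}$, package the two halves of $1\le|F|\le h$ correctly, and invoke the cited bound.

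First I would verify the inequality $Y_S' \geq Y_{\mathrm{Sch}}$, which is the only quantitative point that needs to be checked (and is essentially asserted in the paragraph immediately preceding the proposition). Using \eqref{implied-h}, i.e.\ $h<M^{1/2}/10^n$, the first factor of $Y_{\mathrm{Sch}}$ is bounded by $M^{(n+1/2)/(n-\lambda)}$ times a quantity of the form $(2(2n^{1/2}/10)^n)^{1/(n-\lambda)}$, which by \eqref{lambda} stays bounded. The second factor is $4^{\lambda/(n-\lambda)} M^{\lambda/(a^2(n-\lambda))} e^{n\lambda/(2a^2(n-\lambda))}$; by \eqref{lambda} again, $\lambda/(n-\lambda)\asymp 1/\sqrt n$, so this factor is $M^{O(1/\sqrt n)}$ times a constant. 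Consequently $Y_{\mathrm{Sch}}=M^{1+O(1/\sqrt n)}\le M^2=Y_S'$, once $n$ is sufficiently large. For the remaining small values of $n$ the bound $\ll\sqrt{ns}$ is trivial, absorbing those cases into the implicit constant.

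Next I would handle the split into \eqref{in1h} and \eqref{in2h} exactly as prescribed in Section~\ref{secpart2def}: choose a primitive solution in each subrange and conjugate by a matrix of $\mathrm{GL}_2(\mathbb Z)$ so that $F$ becomes normalized with respect to $(\mathfrak{a},\mathfrak{b})=(h^{1/2},h)$ (resp.\ $(1,h^{1/2})$), then pass to the equivalent reduced form, which has Mahler measure $M$ satisfying \eqref{SchE3.4}. Since the counting problem is invariant under $\mathrm{GL}_2(\mathbb Z)$, and the definition of "large" refers only to $y>Y_S'=M^2$, the threshold comparison above applies to the reduced form. Schmidt's result then yields $\ll\sqrt{ns}$ primitive large solutions in each of the two subranges, and adding the two contributions still gives $\ll\sqrt{ns}$.

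The only potential obstacle is the threshold verification, but because \eqref{hinD} already forces $h$ to be polynomially smaller than $M^{1/2}$, the gap between $Y_{\mathrm{Sch}}\asymp M^{1+O(1/\sqrt n)}$ and $Y_S'=M^2$ is ample, and no further work is needed beyond tracking constants. In particular, \emph{no} new approximation argument is required here; Proposition~\ref{p2large} is a direct corollary of \cite[Section~4]{S87} combined with our choice of $Y_S'$ and the size restriction~\eqref{hinD} on $h$.
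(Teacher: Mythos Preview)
Your proposal is correct and follows exactly the paper's approach: the paper's own proof is simply a citation to \cite{S87} (Theorems~3 and~4 there), with the threshold comparison $Y_S'=M^2>Y_{\mathrm{Sch}}$ asserted in the paragraph preceding the proposition. Your write-up merely supplies the details of that comparison and of the split into the two subranges \eqref{in1h}, \eqref{in2h}; note only that the factors you call ``bounded'' or ``constant'' actually grow like $n^{1/2}$ and $e^{O(\sqrt n)}$, but since \eqref{SchE3.4} gives $M>100^n$ these are absorbed into $M^{O(1/\sqrt n)}$ and your conclusion $Y_{\mathrm{Sch}}=M^{1+O(1/\sqrt n)}\le M^2$ stands.
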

\begin{proof}
This is proven in \cite{S87}. See the discussion in the beginning of page 247 of \cite{S87}, which results in  Theorems 3 and 4 of \cite{S87}. 
\end{proof}

We conclude this section by recalling the trivial but important fact that  in inequalities \eqref{in1h} and \eqref{in2h} the discriminant $D$ is fixed, but the Mahler measure (and therefore the definition of small and large solutions) varies.

\section{Small solutions (II), proof of Proposition \ref{p2small}}\label{secpart2small}

We first give an upper bound for the number of solutions such that $0< y\leq Y_S'$. 

\subsection{Estimation of linear forms} 
Let $0 \leq \mathfrak{a} \leq \mathfrak{b}$, with \eqref{ineqab} and \eqref{blessh}. Suppose that $F(x , y)$ belongs to the class $C(4s-2)$, is reduced and satisfies \eqref{SchE5.1}.

We have
$$
F(x,y)=a_0(x-\alpha_1y)\ldots(x-\alpha_n y),
$$
where $\alpha_1,\ldots, \alpha_n$ are the roots of the polynomial $F(x,1)$, and 
put 
$$
L_i(x,y)=x-\alpha_i y
$$
for $i=1,\dots,n$.  The following is Lemma 4 of \cite{S87}. We present its short proof here to clarify the definition of $Q$ in \eqref{SchE5.1} and more importantly the importance of the assumption $\mathfrak{b} \leq h$.
\begin{lemma}\label{SchL4}
Suppose $G(x , y) =  b_{0} (x -\beta_{1} y)\ldots (x - \beta_{n} y)$ is normalized and equivalent to the reduced form $F$, with \eqref{ineqab}, \eqref{blessh} and \eqref{bovera}, and let
\begin{equation}\label{SchE5.3}
\eta_{i} = |\beta_{i} - m| + 1\, \, \, (i = 1, \ldots, n)
\end{equation}
where $m$ is an integer. Then
\begin{equation}\label{SchE5.4}
\eta_{1} \ldots \eta_{n}  > Q,
\end{equation}
where $Q$ is given in  in \eqref{SchE5.1}.
\end{lemma}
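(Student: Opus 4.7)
The proof plan rests on a single clean idea: translate $G$ by an integer, use that this translate still lies in the normalized equivalence class of $F$, and invoke reducedness.

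First, I would form the translated binary form
$$
\tilde{G}(x,y) = G(x + my, y) = b_0\prod_{i=1}^n \bigl(x - (\beta_i - m)y\bigr).
$$
Since $m \in \mathbb{Z}$, the matrix $\bigl(\begin{smallmatrix}1 & m\\ 0 & 1\end{smallmatrix}\bigr)$ belongs to $\mathrm{GL}_2(\mathbb{Z})$, so $\tilde G$ is $\mathrm{GL}_2(\mathbb{Z})$-equivalent to $G$ and hence to $F$. Crucially, the leading coefficient of $\tilde G$ is still $b_0$, so by hypothesis \eqref{ineqab} the translate satisfies $\mathfrak{a} \leq |b_0| \leq \mathfrak{b}$ and is therefore normalized with respect to $(\mathfrak{a},\mathfrak{b})$.

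Since $F$ is reduced (i.e.\ of minimal Mahler measure in its normalized equivalence class), we obtain $M(\tilde G) \geq M(F) = M$. Expanding the Mahler measure,
$$
M \;\leq\; M(\tilde G) \;=\; |b_0|\,\prod_{i=1}^n \max\bigl(1,\,|\beta_i - m|\bigr).
$$
The next step is the trivial inequality $\max(1,|\beta_i - m|) \leq |\beta_i - m| + 1 = \eta_i$, which is strict for every $\beta_i \neq m$; since $F$ is irreducible of degree $n\geq 3$, no root $\beta_i$ can equal the integer $m$, so the strict inequality holds for all $i$ and thus for the product.

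Finally, I would invoke the hypothesis \eqref{blessh}: combined with normalization it yields $|b_0| \leq \mathfrak{b} \leq h$. Plugging this in,
$$
M \;<\; |b_0|\,\prod_{i=1}^n \eta_i \;\leq\; h\,\prod_{i=1}^n \eta_i,
$$
which rearranges to $\eta_1 \cdots \eta_n > M/h = Q$, as desired. There is no real obstacle here; the only point worth flagging is that the argument exhibits \emph{why} the assumption $\mathfrak{b}\leq h$ appears in the statement of the lemma: without it one would only conclude $\prod \eta_i > M/\mathfrak{b}$, which in general is strictly weaker than $Q = M/h$. The assumption \eqref{bovera} on $\mathfrak{b}/\mathfrak{a}$ plays no role in this particular lemma and is presumably reserved for subsequent applications.
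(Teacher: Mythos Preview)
Your proof is correct and follows essentially the same route as the paper: translate $G$ by the integer $m$, observe that the translate is still normalized and equivalent to $F$, and use reducedness of $F$ together with $|b_0|\leq\mathfrak{b}\leq h$ to compare Mahler measures. Your added justification of the strict inequality via irreducibility, and your remark that \eqref{bovera} is not used here, are both accurate.
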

\begin{proof}
The form 
$$
\hat{G}(x , y) = G(x + my , y) = b_{0} \prod_{i=1}^{n} (x + (m - \beta_{i}) y)
$$
is also normalized and equivalent to both $G$ and $F$. Since $F$ is reduced 
$$
M(F) \leq M(\hat{G}) < \left|b_{0}\right| \eta_{1} \ldots \eta_{n}.
$$
Our proof is complete, since $G$ is reduced and  $\left|b_{0} \right| \leq h$. 
\end{proof}

Our next Lemma is  a modified version of Lemma 5 of \cite{S87}.
\begin{lemma}\label{SchL5new}
Suppose  $(x_{0}, y_{0})$ and $(x , y)$ are linearly independent primitive integer points that satisfy 
$\mathfrak{a} \leq \left| F(x , y) \right| \leq \mathfrak{b}$,  with \eqref{ineqab}, \eqref{blessh} and \eqref{bovera}. Then there are numbers $\psi_{1}$, \ldots, $\psi_{n}$ satisfying
\begin{equation}\label{SchE5.5new}
\psi_{i} = 0 \, \, \qquad \textrm{or} \, \, \qquad \frac{1}{2n} \leq \psi_{i} \leq 1,
\end{equation}
with
\begin{equation}\label{SchE5.6new}
\sum_{i=1}^{n} \psi_{i} \geq \frac{1}{2},
\end{equation}
such that 
\begin{equation}\label{SchE5.7new}
\left|\frac{L_{i}(x_{0}, y_{0})}{L_{i}(x , y)} \right|  \geq \left(Q^{\psi_{i}} -\frac{3}{2}  - h^{1/2n} \right)  \left|x_{0} y - x y_{0}\right|,
\end{equation}
for $i \in \{1, \ldots, n\}$.
\end{lemma}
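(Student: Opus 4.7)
My plan is to adapt Lemma 5 of \cite{S87}, the new twist being that the hypothesis $\mathfrak{b}/\mathfrak{a}\le h^{1/2}$ from \eqref{bovera} is what produces the extra $-h^{1/(2n)}$ term in \eqref{SchE5.7new}. Let $\beta_1,\ldots,\beta_n$ be the numbers from Lemma \ref{S56}, i.e.\ the roots of the form
$$J(u,w)=F(x,y)\prod_{i=1}^n(u-\beta_i w)$$
equivalent to $F$; since $\mathfrak{a}\le|F(x,y)|\le\mathfrak{b}$, this $J$ is normalized in the sense required by Lemma \ref{SchL4}. I would fix a distinguished index $j_0$ minimizing $|L_j(x_0,y_0)/L_j(x,y)|$ over $j=1,\ldots,n$. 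The geometric-mean estimate
$$\prod_{j=1}^{n}\left|\frac{L_j(x_0,y_0)}{L_j(x,y)}\right|=\frac{|F(x_0,y_0)|}{|F(x,y)|}\leq\frac{\mathfrak{b}}{\mathfrak{a}}\leq h^{1/2}$$
then yields $|L_{j_0}(x_0,y_0)/L_{j_0}(x,y)|\leq h^{1/(2n)}$, which is precisely where \eqref{bovera} enters.

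Next I would pick an integer $m$ with $|\Re(\beta_{j_0})-m|\le 1/2$ and apply Lemma \ref{SchL4} to $J$ with this $m$, obtaining $\prod_i\eta_i\ge Q$ for $\eta_i:=|\beta_i-m|+1$. Set $\psi_i':=\min(\log\eta_i/\log Q,\,1)$, so that $\sum_i\psi_i'\geq 1$ (if all $\eta_i\le Q$ this follows from $\sum\log\eta_i\ge\log Q$; if some $\eta_{i_0}>Q$, then that single index already contributes $1$). Define $\psi_i:=\psi_i'$ when $\psi_i'\ge 1/(2n)$ and $\psi_i:=0$ otherwise; cutting off small values reduces the sum by at most $n\cdot 1/(2n)=1/2$, so $\sum_i\psi_i\ge 1/2$, establishing both \eqref{SchE5.5new} and \eqref{SchE5.6new}. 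Moreover, when $\psi_i>0$ one has $\eta_i\ge Q^{\psi_i}\ge Q^{1/(2n)}\ge 10$ by \eqref{SchE5.1}, which ensures the lower bound in the final step is non-degenerate.

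To prove \eqref{SchE5.7new} when $\psi_i>0$ (the case $\psi_i=0$ is trivial since the right-hand side is then negative), I would symmetrize the identity of Lemma \ref{S56} by combining its versions with $j=j_0$ and $j=\bar{j_0}$, where $\bar{j_0}$ is the index of the complex-conjugate root $\overline{\alpha_{j_0}}$. Using $\beta_{j_0}+\beta_{\bar{j_0}}=2\Re(\beta_{j_0})$ together with the fact that $L_{j_0}(x_0,y_0)/L_{j_0}(x,y)$ and $L_{\bar{j_0}}(x_0,y_0)/L_{\bar{j_0}}(x,y)$ are complex conjugates of each other, averaging the two identities gives
$$\left|\frac{L_i(x_0,y_0)}{L_i(x,y)}\right|\ge|\Re(\beta_{j_0})-\beta_i|\,|\mathcal{D}(\bm x,\bm x_0)|-\left|\frac{L_{j_0}(x_0,y_0)}{L_{j_0}(x,y)}\right|.$$
The reverse triangle inequality yields $|\Re(\beta_{j_0})-\beta_i|\ge(\eta_i-1)-1/2\ge Q^{\psi_i}-3/2$, and $|\mathcal{D}(\bm x,\bm x_0)|\ge 1$ since $(x_0,y_0)$ and $(x,y)$ are linearly independent primitive integer points; together with the estimate on $|L_{j_0}(x_0,y_0)/L_{j_0}(x,y)|$ above, this gives \eqref{SchE5.7new}. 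The main obstacle is performing the symmetrization cleanly: one must verify that averaging the identities for $j_0$ and $\bar{j_0}$ leaves only a single $h^{1/(2n)}$ error term (which works thanks to conjugate symmetry and not a doubling), and confirm that the argument degenerates correctly when $\alpha_{j_0}$ happens to be real, where $\bar{j_0}=j_0$ and no averaging is needed.
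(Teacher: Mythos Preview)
Your proposal is correct and follows essentially the same route as the paper. The paper completes a primitive pair $(x,y),(x',y')$ with $x'y-xy'=1$, writes $L_i(x_0,y_0)/L_i(x,y)=a-(x_0y-xy_0)\beta_i$ explicitly, picks the index minimizing this ratio, bounds it by $h^{1/(2n)}$ via \eqref{bovera}, passes to $\Re(\beta_n)$ and a nearest integer $m$, applies Lemma~\ref{SchL4}, and defines the $\psi_i$ by the same three-level truncation you describe; your use of Lemma~\ref{S56} together with averaging over $j_0,\bar{j_0}$ is just the differential form of the paper's identity $L_i(x_0,y_0)/L_i(x,y)=a-b\beta_i$, so the two derivations of \eqref{SchE5.7new} coincide up to notation.
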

\begin{proof}
Pick $(x', y') \in \mathbb{Z}^2$ with $x'y -x y' = 1$, so that $(x', y')$ and $(x , y)$ form a basis for $\mathbb{Z}^2$. We may write
$
(x_{0}, y_{0}) = a (x , y)+ b (x', y')$. Then
$$
x_{0}y - xy_{0} = b\left(x' y -xy'\right)=b.
$$
Therefore,
\begin{equation}\label{SchE5.8new}
\frac{L_{i}(x_{0}, y_{0})}{L_{i}(x , y)} = a + (x_{0} y - x y_{0}) \frac{L_{i}(x', y')}{L_{i}(x , y)}  = a - (x_{0} y - x y_{0}) \beta_{i}, 
\end{equation}
for $i \in \{1, \ldots, n\}$. (We define $\beta_i$ by the second equation above.)
Set
$$
G(v , w) : = F\left(v(x, y) + w(x', y')\right),
$$
so that $G$ is equivalent to $F$, and $G$ is normalized (recall that $(x, y)$ and $(x', y')$ are fixed and $x'y -x y' = 1$). We have
\begin{eqnarray*}
G(v , w)  &= & a_{0} \prod_{i=1}^{n} L_{i}\left(v(x, y) + w(x', y')\right) \\
& = & a_{0} \prod_{i=1}^{n} \left( v L_{i}(x, y) + w L_{i} (x', y')\right) \\
& = & b_{0}\prod_{i=1}^{n} \left( v + \frac{L_{i}(x', y')}{L_{i}(x , y)} w  \right)\\
& = & b_{0} \prod_{i=1}^{n} (v - \beta_{i} w)
\end{eqnarray*}
with $b_{0} = F(x , y)$. Note that 
$$
 \left|\frac{F(x_{0} , y_{0})}{F(x , y)}\right|\leq h^{1/2}.
$$
We may assume that 
$$
\left| \frac{L_{n}(x_{0}, y_{0})}{L_{n}(x, y)}\right| = \min_{i} \left| \frac{L_{i}(x_{0}, y_{0})}{L_{i}(x, y)}\right|,
$$
so that 
$$
\left| \frac{L_{n}(x_{0}, y_{0})}{L_{n}(x, y)}\right| \leq h^{\frac{1}{2n}}.
$$
By \eqref{SchE5.8new}, we have
\begin{equation}\label{SchE5.9new}
\left| a - (x_{0} y - x y_{0}) \beta_{n}\right| \leq h^{\frac{1}{2n}},
\end{equation}
and
$$
\left| a - (x_{0} y - x y_{0}) \beta \right| \leq h^{\frac{1}{2n}},
$$
where $\beta$ is the real part of $\beta_{n}$. Now let $m$ be an integer with 
$$
| m - \beta| \leq \frac{1}{2}
$$
and define $\eta_{1}, \ldots, \eta_{n}$ by \eqref{SchE5.3}, so that \eqref{SchE5.4} holds by Lemma \ref{SchL4}. We define
\[
\eta'_{i} = \left\{ \begin{array}{lcl}
Q  & \mbox{if}  & \eta_{i} \geq Q,\\
\eta_{i}  & \textrm{if}  & Q^{1/2n} \leq\eta_{i} <Q,\\
1   & \textrm{if}  & \eta_{i} < Q^{1/2n}.
\end{array} \right.
\]
We note that since  $\eta_{1} \ldots \eta_{n} > Q$, we have, by the definition, that
\begin{equation}\label{eta'Q}
\eta'_{1} \ldots \eta'_{n} \geq Q^{1/2}.
\end{equation}
Now we define the numbers $\psi_{i}$, for $i \in \{1, \ldots, n\}$, as follows
$$
\eta'_{i} = Q^{\psi_{i}}.
$$
Clearly $\psi_{i}$ satisfy \eqref{SchE5.5new} and \eqref{SchE5.6new}. By \eqref{SchE5.8new} and \eqref{SchE5.9new},
\begin{eqnarray*}
\left|\frac{L_{i}(x_{0}, y_{0})}{L_{i}(x , y)} \right| & = & \left| (\beta - \beta_{i}) (x_{0} y - xy_{0}) + a - (x_{0} y - xy_{0}) \beta \right|\\
&\geq& \left| \beta - \beta_{i}\right| \left| x_{0} y - xy_{0}\right| -h^{\frac{1}{2n}} \geq \left( \left| m - \beta_{i}\right| - \frac{1}{2} \right) 
\left| x_{0} y - xy_{0} \right| - h^{\frac{1}{2n}}   \\
&\geq &     \left( \left| m - \beta_{i}\right| - \frac{1}{2}  - h^{\frac{1}{2n}} \right) \left| x_{0} y - xy_{0} \right| \\
& = & \left( \eta_{i} -\frac{3}{2}  - h^{\frac{1}{2n}} \right)\left| x_{0} y - xy_{0} \right|.
\end{eqnarray*}
Since $\eta_i \geq \eta'_i = Q^{\psi_{i}}$, the proof of the lemma is completed.
\end{proof}

The following is  a modified version of Lemma 6 in \cite{S87}:

\begin{lemma}\label{SchL6new}
Suppose $(x , y)$ is primitive, with $y> 0$, $\mathfrak{a}\leq |F(x , y)| \leq \mathfrak{b}$, with \eqref{ineqab}, \eqref{blessh} and \eqref{bovera}.
Then there are numbers $\psi_{i} = \psi_{i}(x , y)$ ($i = 1, \ldots, n$), which satisfy \eqref{SchE5.5new} and \eqref{SchE5.6new}, such that
\begin{equation}\label{SchE5.10new}
\left| L_{i}(x , y)\right| < \frac{1}{Q^{\psi_{i}/2} y}
\end{equation}
for each $i$ with $\psi_{i} > 0$.
\end{lemma}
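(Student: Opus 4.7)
The plan is to reduce Lemma \ref{SchL6new} to a direct application of Lemma \ref{SchL5new} with the distinguished reference point $(x_{0}, y_{0}) = (1, 0)$. Since $F$ is normalized with respect to $(\mathfrak{a}, \mathfrak{b})$, its leading coefficient $a_{0} = F(1, 0)$ satisfies $\mathfrak{a} \leq |a_{0}| \leq \mathfrak{b}$, so $(1, 0)$ is itself a primitive solution of \eqref{ineqab}. For any primitive solution $(x, y)$ with $y > 0$, the two points are linearly independent, and so the hypotheses of Lemma \ref{SchL5new} are met.

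Plugging in $L_{i}(1, 0) = 1$ and $x_{0} y - x y_{0} = y$, the conclusion \eqref{SchE5.7new} of Lemma \ref{SchL5new} becomes
$$\frac{1}{|L_{i}(x, y)|} \;\geq\; \left(Q^{\psi_{i}} - \tfrac{3}{2} - h^{1/2n}\right) y,$$
so that
$$|L_{i}(x, y)| \;\leq\; \frac{1}{\left(Q^{\psi_{i}} - \tfrac{3}{2} - h^{1/2n}\right) y}$$
for every $i$ with $\psi_{i} > 0$, while the $\psi_{i}$ automatically satisfy \eqref{SchE5.5new} and \eqref{SchE5.6new}.

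It then suffices to verify the numerical inequality
$$Q^{\psi_{i}} - \tfrac{3}{2} - h^{1/2n} \;>\; Q^{\psi_{i}/2}$$
whenever $\psi_{i} \geq 1/(2n)$. From \eqref{SchE5.1} we have $Q \geq 100^{n} h$, hence $h^{1/2n} \leq Q^{1/2n}/10$. Writing $t = Q^{\psi_{i}/2}$, the bound $\psi_{i} \geq 1/(2n)$ gives $t \geq Q^{1/(4n)} \geq 100^{1/4} > 3$ and $h^{1/2n} \leq Q^{\psi_{i}}/10 = t^{2}/10$. The desired inequality reduces to
$$\tfrac{9}{10} t^{2} - t - \tfrac{3}{2} > 0,$$
which holds comfortably for $t > 3$. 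This yields the strict bound $|L_{i}(x, y)| < 1/(Q^{\psi_{i}/2} y)$.

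I do not anticipate a real obstacle here: the entire argument is an application of the previous lemma to a well-chosen pair, plus a margin calculation. The only thing that has to be watched is that the assumption \eqref{SchE3.4} (equivalently \eqref{SchE5.1}) comfortably dominates the additive loss $\tfrac{3}{2} + h^{1/2n}$ coming from Lemma \ref{SchL5new}, and the factor $100^{n}$ built into that hypothesis is precisely what makes this step painless.
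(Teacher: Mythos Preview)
Your proof is correct and follows essentially the same route as the paper: apply Lemma~\ref{SchL5new} with the reference point $(x_0,y_0)=(1,0)$, use $L_i(1,0)=1$ and $x_0y-xy_0=y$, and then absorb the additive loss $\tfrac{3}{2}+h^{1/2n}$ into the gap between $Q^{\psi_i}$ and $Q^{\psi_i/2}$ via \eqref{SchE5.1}. The paper's numerical step is arranged slightly differently (it shows $Q^{\psi_i}-\tfrac{3}{2}-h^{1/2n}\ge \tfrac{1}{2}Q^{\psi_i}\ge Q^{\psi_i/2}$ using $Q\ge 4^{2n}$), but your quadratic-in-$t$ verification is equally valid.
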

\begin{proof}
We first note, by definition,  that for $i$ with $\psi_{i} > 0$, we have $\psi_{i} \geq \frac{1}{2n}$, and by \eqref{SchE5.1},
$$
Q^{\psi_{i}} \geq Q^{\frac{1}{2n}} > 3 +2h^{1/2n}.
$$
Therefore,
$$
Q^{\psi_{i}} - \frac{3}{2} - h^{1/2n} \geq \frac{1}{2} Q^{\psi_{i}}\geq Q^{\psi_i/2}
$$
since, by \eqref{SchE5.1} again, $Q\geq 4^{2n}$.
We now apply Lemma \ref{SchL5new} with $(x_{0}, y_{0}) = (1 , 0)$.
\end{proof}

\subsection{Counting  Small Solutions}

We define $\Phi_i$ ($i=1,\ldots,n$) by
\begin{equation}\label{phi}
 \left\{\begin{array}{ll}
                \Phi_i=0 &\mbox{when $|\textrm{Im} \, \alpha_{i}|>1$},\\
                M^{-\Phi_i}=|\textrm{Im} \, \alpha_{i}| &\mbox{when $0< |\textrm{Im} \, \alpha_{i}|\leq 1$},\\
                \Phi_i=+\infty &\mbox{when $\alpha_i$ is real}.
               \end{array}\right.
\end{equation}

\begin{lemma}\label{SchL7}
Let $\mathfrak{Y}$ be the set of primitive integer points satisfying  
$$\mathfrak{a} \leq |F(x , y)| \leq \mathfrak{b},
$$
with $\frac{\mathfrak{b}}{\mathfrak{a}}\leq h^{\frac{1}{2}}$, and $0 < y \leq Y_{S}'$. Then for $i = 1, \ldots, n$,
\begin{equation}\label{SchE6.1}
\sum_{x \in \mathfrak{Y}} \psi_{i}(x , y) < 10 \min (1, \Phi_{i}) \frac{\log M}{\log Q}.
\end{equation}
\end{lemma}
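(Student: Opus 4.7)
The plan is to combine the sharp approximation from Lemma \ref{SchL6new} with an elementary gap principle, then split into cases according to the size of $|\textrm{Im}\,\alpha_i|$.

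\emph{Case $\Phi_i=0$.} If $|\textrm{Im}\,\alpha_i|>1$, no primitive $(x,y)\in\mathfrak{Y}$ with $\psi_i(x,y)>0$ can exist, since Lemma \ref{SchL6new} would give $|L_i(x,y)|<1/(Q^{\psi_i/2}y)\le 1/y$, while the elementary bound $|L_i(x,y)|\ge |\textrm{Im}\,\alpha_i|\,y>y$ forces $y^2<1$, impossible for $y\ge 1$. So the left side of \eqref{SchE6.1} is zero.

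\emph{Gap principle.} In the remaining cases I would index the primitive solutions with $\psi_i>0$ by $y_1<y_2<\cdots<y_r\le Y_S'$ (strict ordering is forced by primitivity and the quantitative estimate below) and write $\psi_j=\psi_i(x_j,y_j)$. Starting from $1\le|x_j y_{j+1}-x_{j+1}y_j|=|y_{j+1}L_i(x_j,y_j)-y_j L_i(x_{j+1},y_{j+1})|$ and applying Lemma \ref{SchL6new} to each factor,
\[
\frac{y_{j+1}}{Q^{\psi_j/2}y_j}+\frac{y_j}{Q^{\psi_{j+1}/2}y_{j+1}}>1.
\]
The second summand is at most $Q^{-1/(4n)}\le 100^{-1/4}<1/3$, since $\psi_{j+1}\ge 1/(2n)$ by Lemma \ref{SchL5new} and $Q\ge 100^n$ by \eqref{SchE5.1}. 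Hence $y_{j+1}/y_j>(2/3)Q^{\psi_j/2}>2$. Telescoping, together with $y_1\ge 1$ and $r-1\le\log y_r/\log 2$ from the gap, yields
\[
\sum_{j=1}^{r-1}\psi_j<\frac{2\log y_r+2(r-1)\log(3/2)}{\log Q}<\frac{3.17\log y_r}{\log Q}.
\]

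\emph{Completing the estimate.} It remains to bound $\log y_r$ and $\psi_r$ according to $\Phi_i$. If $0<|\textrm{Im}\,\alpha_i|\le 1$, combining Lemma \ref{SchL6new} with $|L_i(x,y)|\ge M^{-\Phi_i}y$ gives $y^2<M^{\Phi_i}/Q^{\psi_i/2}$ for every contributing solution; in particular $y_r\le M^{\Phi_i/2}$ and $\psi_r<2\Phi_i\log M/\log Q$. For $\Phi_i<1$ this gives
\[
\sum_{j=1}^{r}\psi_j<\frac{3.17\,\Phi_i\log M/2+2\Phi_i\log M}{\log Q}<10\Phi_i\frac{\log M}{\log Q}.
\]
For $\Phi_i\ge 1$ (or $\alpha_i$ real, where the individual bound is vacuous), I would instead use the global $y_r\le Y_S'=M^2$ and the trivial $\psi_r\le 1\le \log M/\log Q$ (from $Q=M/h\le M$), giving $\sum_{j=1}^r\psi_j<7.34\log M/\log Q\le 10\log M/\log Q=10\min(1,\Phi_i)\log M/\log Q$.

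\emph{Main obstacle.} The delicate step is the quantitative gap. To extract $y_{j+1}>(2/3)Q^{\psi_j/2}y_j$ one needs the ``wrong direction'' cross-term $y_j/(Q^{\psi_{j+1}/2}y_{j+1})$ uniformly bounded away from $1$, which rests on $Q^{1/(4n)}>3$, i.e., $Q\ge 100^n$ from \eqref{SchE5.1}. This is precisely where the hypothesis \eqref{hinD} relating $h$ to the discriminant enters; without such a quantitative lower bound on $Q$ the telescoping collapses and no useful estimate for $\sum\psi_j$ follows.
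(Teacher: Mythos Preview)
Your proposal is correct and follows essentially the same approach as the paper: derive the approximation $|L_i(x,y)|<Q^{-\psi_i/2}y^{-1}$ from Lemma~\ref{SchL6new}, extract a gap $y_{j+1}>(2/3)Q^{\psi_j/2}y_j$ from linear independence of consecutive solutions, telescope, and bound $y_r$ by $\min(Y_S',M^{\Phi_i/2})$ together with a separate estimate for $\psi_r$. The only cosmetic difference is that the paper absorbs the factor $2/3$ by weakening to $y_{j+1}>Q^{\psi_j/4}y_j$ (using $Q\ge 4^{2n}$), whereas you keep the $2/3$ and control the accumulated $(3/2)^{r-1}$ via $r-1\le \log y_r/\log 2$; both routes give constants well under $10$.
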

\begin{proof} 
We follow the proof of Lemma 7 in \cite{S87}.
For a fixed $i$, let $(x_{1}, y_{1})$, \ldots, $(x_{\nu}, y_{\nu})$ be the elements of $\mathfrak{Y}$ with $\psi_{i}> 0$, 
ordered such that $y_{1}\leq \ldots \leq y_{\nu}$. By \eqref{SchE5.10new}, we have
$$
\left| \textrm{Im} \, \alpha_{i}\right| \leq \left| \alpha_{i} - \frac{x_{j}}{y_{j}}\right| < \frac{1}{Q^{\psi_{i}(x_{j}, y_{j})/2} y_{j}^2}
$$
for $j=1, \ldots, \nu$. First we conclude that $\left|\textrm{Im} \, \alpha_{i}\right| \leq 1$, so that
\begin{equation}\label{phi}
\left|\textrm{Im} \, \alpha_{i}\right| = M^{-\Phi_{i}}
\end{equation}
(with $M^{-\infty} = 0$). So we have
$$
 M^{-\Phi_{i}} < \frac{1}{Q^{\psi_{i}(x_{j}, y_{j})/2} y_{j}^2}.
 $$
Therefore, for every $(x_{j}, y_{j}) \in \mathfrak{Y}$, we have
\begin{equation}\label{SchE6.2}
y_{j} < M^{\Phi_{i}/2}, \qquad \, \, \psi_{i}(x_{j}, y_{j}) < 2 \frac{\log M}{\log Q} \Phi_{i}.
\end{equation}
In particular,
$$
y_{\nu} \leq Y_{i},
$$
where 
\begin{equation}\label{defofYi}
Y_{i} : = \min(Y_{S}' , M^{\Phi_{i}/2}).
\end{equation}
Now let us suppose that $\nu > 1$ and $1 \leq j < \nu$. We have
$$
\left|\alpha_{i}- \dfrac{x_{j}}{y_{j}}\right| < \dfrac{1}{Q^{\psi_{i}(x_{j}, y_{j})/2} y_{j}^2},
$$
and 
$$
\left|\alpha_{i}- \dfrac{x_{j+1}}{y_{j+1}}\right| < \dfrac{1}{Q^{\psi_{i}(x_{j+1}, y_{j+1})/2} y_{j+1}^2}.
$$
So we have
\begin{eqnarray*}
1& \leq & \left| x_{j} y_{j+1} - x_{j+1} y_{j} \right| \\
& = &\left| \left(y_{j} y_{j+1}\right) \left( \frac{x_{j}}{y_{j}} - \alpha_i + \alpha_{i} - \frac{x_{j+1}}{y_{j+1}} \right)\right|\\
& < & \frac{y_{j+1}}{y_{j} Q^{\psi_{i}(x_{j}, y_{j})/2}} + \frac{y_{j}}{y_{j+1} Q^{\psi_{i}(x_{j+1}, y_{j+1})/2}}\\
& < &  \frac{y_{j+1}}{y_{j} Q^{\psi_{i}(x_{j}, y_{j})/2}} + \frac{1}{3},
\end{eqnarray*}
since $\frac{y_{j}}{y_{j+1}}\leq 1$ and $Q^{\psi_{i}(x_{j+1}, y_{j+1})/2} \geq Q^{\frac{1}{4n}} > 3$ by \eqref{SchE5.1}. 
Therefore, we obtain the following \emph{gap principle}.
$$
y_{j+1} > \frac{2}{3} Q^{\psi_{i}(x_{j}, y_{j})/2} y_{j} > Q^{\psi_{i}(x_{j}, y_{j})/4} y_{j} \, \qquad (1\leq j < \nu),
$$
by using \eqref{SchE5.1} once again.

Applying the above gap principle repeatedly, and by the definition of $Y_{i}$ in \eqref{defofYi}, we have
$$
Q^{\frac{1}{4}\left(\psi_i(x_{1}, y_{1}) + \ldots + \psi_i(x_{\nu-1}, y_{\nu-1})\right)} < y_{\nu} \leq Y_{i}
$$
and consequently,
\begin{equation*}
\sum_{j=1}^{\nu -1} \psi_{i}(x_{j}, y_{j}) < 4 \frac{\log Y_{i}}{\log Q}.
\end{equation*}
For our choice of $Y_{S}' = M^{2}$,
$$
\log Y_{S}' = 2 \log M,
$$
and therefore
$$
\sum_{j=1}^{\nu -1} \psi_{i}(x_{j}, y_{j}) < 4 \min\left(2, \frac{\Phi_{i}}{2}\right) \frac{\log M}{\log Q} \leq 8 \min\left(1, \Phi_{i}\right) 
\frac{\log M}{\log Q}.
$$
Now to estimate $\sum_{j=1}^{\nu} \psi_{i}(x_{j}, y_{j}) $, we only need to estimate  $\psi_{i}(x_{\nu}, y_{\nu}) $. By \eqref{SchE5.5new} and \eqref{SchE6.2}, we have
$$
\psi_{i}(x_{v}, y_{v}) \leq \min \left(1 , 2 \Phi_{i}\frac{\log M}{\log Q}\right) \leq  2 \min\left(1, \Phi_{i}\right) \frac{\log M}{\log Q}.
$$
So we conclude the assertion of the Lemma.
\end{proof}

Now we note that the number of small solutions to $\mathfrak{a}\leq |F(x,y)|\leq \mathfrak{b}$ is equal to $\sum_{x \in \mathfrak{Y}} 1$, and by 
\eqref{SchE5.6new} and \eqref{SchE5.1}, we have
\begin{eqnarray*}
\sum_{x \in \mathfrak{Y}} 1& \leq& 2 \sum_{i=1}^{n} \sum_{x \in \mathfrak{Y}} \psi_{i}\\
&\leq& 20\sum_{i=1}^{n} \min\left(1, \Phi_{i}\right) \frac{\log M}{\log Q}\\
&\leq& 40\sum_{i=1}^{n} \min\left(1, \Phi_{i}\right).
\end{eqnarray*}
In the next subsection we will show that
\begin{equation}\label{sumphi}
\sum_{i=1}^{n} \min\left(1, \Phi_{i}\right) \ll \sqrt{sn}
\end{equation}
This will complete the proof of  Proposition \ref{p2small}.

\subsection{The clustering of roots with small imaginary parts}
In order to utilize a powerful result of Schmidt in \cite{S87}, which will be stated in Proposition \ref{corsec9}, 
 we will assume that $n>1700 (\log n)^3$. Otherwise,  we have $n \leq 1700 (\log n)^3$ and therefore in this case $n \ll s$, and 
 the previously established  bound $\mathcal{O}(n)$ for the number of primitive solutions of general Thue's inequalities (see \cite{Sh15}, for example)  will prove Proposition \ref{p2small}.

Our goal now is to show \eqref{sumphi}, for $n>1700 (\log n)^3$. Our discussion is the same as Section 9 of \cite{S87}.
\begin{prop}\label{corsec9}
Let $f(z)$ be a polynomial of degree $n$ with rational coefficients, of Mahler height $M(f)$ and without multiple roots. Suppose that $f(x) f'(x)$ has 
not more than $q - 1$ real roots, where $q \geq1$. Suppose further that $M(f) > e^{2n}$. Then for $\phi$ in
\begin{equation}\label{SchE9.5}
1700 n^{-1} (\log n)^3 \leq \phi \leq 1,
\end{equation}
the number of roots $\mathfrak{x} + \mathbf{i} \mathfrak{y}$ with imaginary part in $0 < \mathfrak{y} \leq M(f)^{-\phi}$ does not exceed 
$\left( \frac{8n q}{\phi}\right)^{1/2}$.
\end{prop}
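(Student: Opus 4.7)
The plan is to exploit the connection between the clustering of roots of $f$ near the real axis and the number of real zeros of $f(x)f'(x)$. The basic observation is that if $\alpha = \mathfrak{x} + i\mathfrak{y}$ is a root of $f$ with small imaginary part $\mathfrak{y} > 0$, then because $f \in \mathbb{Q}[z]$ has real coefficients $\bar\alpha$ is also a root, and on the real line the function $|f|$ attains a small local minimum very near $\mathfrak{x}$ whose size is controlled by $\mathfrak{y}^{2}\prod_{k}|\mathfrak{x} - \alpha_{k}|$, the product being taken over the remaining roots. Between two consecutive such local minima of $|f|$, Rolle's theorem yields either a real zero of $f$ or a real zero of $f'$.

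Let $N$ denote the quantity to bound and let $\alpha_{1},\ldots,\alpha_{N}$ be the roots with $0<\mathfrak{y}_{j}\leq M(f)^{-\phi}$, ordered so that $\mathfrak{x}_{1}\leq \cdots \leq \mathfrak{x}_{N}$. A crude Rolle argument between consecutive local minima would give $N-1$ distinct real zeros of $ff'$ and hence only the weak bound $N\leq q$. To reach the target $N\leq (8nq/\phi)^{1/2}$ I would proceed as follows:
\begin{enumerate}
\item Partition the real parts $\mathfrak{x}_{1},\ldots,\mathfrak{x}_{N}$ into maximal consecutive groups (``clusters'') whose internal gaps are below a threshold $\delta = \delta(M,\phi,n)$ to be optimized, with gaps between different clusters of size at least $\delta$.
\item Show that each cluster gives rise to at least one distinct real zero of $f f'$ inside its convex hull or in the gap separating it from the next cluster, so that the number of clusters is at most $q$.
\item For a single cluster containing $k$ roots, use the Mahler measure identity and the hypothesis $M(f) > e^{2n}$ to bound $k$ in terms of $\delta$, $\phi$, and $n$: roughly, if $k$ roots of $f$ with imaginary parts at most $M^{-\phi}$ sit in a real interval of length $\delta$, then the contribution of the factor $\prod |z-\alpha_j|$ to $\log M(f)$ on the unit circle (or equivalently via the Jensen-type definition of $M$) is small, and squeezing this against $\log M > 2n$ forces $k^{2}\delta \cdot \phi \ll n$ or a similar trade-off.
\item Balance the linear contribution ``number of clusters $\leq q$'' against the within-cluster bound to get the desired $N \ll \sqrt{nq/\phi}$, and track the explicit constant to extract the factor $8$.
\end{enumerate}

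The main obstacle will be step~(3): the within-cluster analysis. One must use the hypothesis $\phi\geq 1700n^{-1}(\log n)^{3}$ in a sufficiently sharp way, and the presence of $(\log n)^{3}$ strongly suggests that a Siegel-type polynomial-packing estimate (or an auxiliary polynomial construction) is invoked to rule out many roots huddling near one point on the real axis without inflating $M(f)$. Once this estimate is in hand, the endgame — optimizing $\delta$ and checking that the hypothesis $M(f) > e^{2n}$ absorbs the error terms — is essentially bookkeeping, and the threshold on $\phi$ is precisely what guarantees the packing estimate applies non-trivially in the regime $\phi \leq 1$.
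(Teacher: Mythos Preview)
The paper does not prove this proposition at all: its entire proof is the single sentence ``This is the Corollary in Section~9 of \cite{S87}.'' So there is no in-paper argument to compare against; the result is imported wholesale from Schmidt's 1987 paper.

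Your sketch is in the right spirit and does track the structure of Schmidt's actual argument: real zeros of $ff'$ are used (via Rolle-type reasoning) to separate clusters of roots lying close to the real axis, and within a cluster a Mahler-measure-based packing bound limits how many roots can crowd together. That is indeed the two-step architecture Schmidt uses.

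However, what you have written is a plan, not a proof, and the gap is exactly where you say it is. Step~(3) is the entire content of the proposition: Schmidt's within-cluster bound (his Lemma~14 and the surrounding analysis) is a nontrivial result relating root separation to $\log M(f)$, and it is precisely this estimate that produces both the exponent $1/2$ in $(8nq/\phi)^{1/2}$ and the lower threshold $1700\,n^{-1}(\log n)^{3}$ on $\phi$. Your description ``forces $k^{2}\delta\cdot\phi \ll n$ or a similar trade-off'' gestures at the right inequality but does not derive it, and the phrase ``a Siegel-type polynomial-packing estimate (or an auxiliary polynomial construction)'' is a placeholder rather than an argument. Until that step is actually carried out --- with the explicit constants and the role of $M(f)>e^{2n}$ pinned down --- the proposal remains a correct outline of where the proof lives, not a proof. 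If your goal is only to match the paper, a citation to \cite{S87} suffices; if your goal is to supply an independent proof, step~(3) must be written out in full.
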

\begin{proof} This is the Corollary in Section 9 of \cite{S87}.
 \end{proof}

Since $F\in C(4s-2)$, then  $f(z)=F(z,1)$ has no more than $4s-1$ real zeros and $f'(z)=F_x(z,1)$ has no more than $4s-2$ real zeros. So $ff'$ has  at most $8s-3=q-1$ real zeros, where we take
\begin{equation}\label{qleq}
q = 8s-2.
\end{equation}

We may suppose that $\Phi_1\geq \ldots\geq\Phi_n$.
The number of summands in
\begin{equation}\label{SchE9.6}
\sum_{i=1}^{n} \min(1 , \Phi_{i})
\end{equation}
with $\Phi_{i} \geq 1$ is the number of roots $\alpha_i$ of $F(x,1)$ with $|\mathrm{Im}\, \alpha_i|\leq M^{-1}$. By  taking $\phi=1$ in \eqref{SchE9.5}, 
the contribution of summands with $\Phi_{i} \geq 1$ in the sum \eqref{SchE9.6} is 
$$
< \left( 8n q \right)^{1/2} = 2 \sqrt{2nq}.
$$

Clearly the summands with $\Phi_i\leq 1700 n^{-1}(\log n)^3<1$ do not contribute.

The remaining summands have $1700 n^{-1}(\log n)^3<\Phi_i<1$. Since
$|\mathrm{Im}\, \alpha_j|=M^{-\Phi_j}\leq M^{-\Phi_i}$ for $j\leq i$, Proposition \ref{corsec9} yields $i<(8nq/\Phi_i)^{1/2}$, so $\Phi_i<8nq/i^2$.
We conclude that these terms contribute
\begin{eqnarray*}
&< &\sum_{i=1}^n \min(1,8nq/i^2) =[2\sqrt{2nq}] +8nq\sum_{i=[\sqrt{8nq}]+1}^n 1/i^2\\
& <& 4\sqrt{2nq} + 1 -8q \ll \sqrt{ns},
\end{eqnarray*}
by \eqref{qleq}.


\section*{Acknowledgements}
The authors are very grateful to the anonymous referee for very helpful suggestions. This article was written while Akhtari was a visitor at the \emph{Max Planck Institute for Mathematics in Bonn}. Akhtari acknowledges the MPIM constant support for her research and collaboration, and in particular both authors are grateful for the opportunity to meet and advance this project in Bonn.
The authors acknowledge the 
support from  FIM, \emph{Forschungsinstitut f\"ur Mathematik}, and are thankful to FIM for  the hospitality and providing stimulating research atmosphere, especially during Akhtari's visits to Z\"urich in 2018 and 2019.
Akhtari's research is partially  supported by the NSF  grant DMS-1601837 and \emph{Simons Foundation's collaboration grant for mathematicians}.
Bengoechea's research is supported by SNF grant 173976.



\begin{thebibliography}{99}


\bibitem{Sh15} S.~Akhtari, Representation of small integers by binary forms, 
 Quart. J. Math.  Oxford J. $66$ no. 4 (2015), 1009-1054.
 
 


\bibitem{AkhSS} S.~Akhtari, N.~Saradha, and D.~Sharma, Thue's inequalities and the hypergeometric method, the Ramanujan Journal (2018) 45: 521-567.

\bibitem{Ben}
M.A.~Bennett, Rational approximation to algebraic numbers of small height: the Diophantine equation $|a x^n - b y^n| =1$. 
 J. Reine Angew. Math. $535$ (2001), 1-49.

\bibitem{BiMer}
B.J. Birch and J.R. Merriman, Finiteness theorems for binary forms with given discriminant,  Proc. London Math. Soc. $25$ (1972), 385-394.

\bibitem{Bom}  E.~Bombieri, W. M. Schmidt, On Thue's equation, Invent. Math. $88$ (1987), 69-81.

\bibitem{Eve82} J.~H.~Evertse , On the equation $ax^n - by^n = c$, Compositio Math. $47$ (1982), 289-315.

\bibitem{EG16} J.~H.~Evertse and K.~Gy\H{o}ry, Thue inequalities with a small number of solutions, in: The mathematical heritage of C.F. Gauss, World Scientific Publ. Co., Singapore, 1991, 204-224.

\bibitem{EG91} J.~H.~Evertse and K.~Gy\H{o}ry,   Effective finiteness results for binary forms with given discriminant,
Compositio Mathematica, $79$ (1991) no. 2, 169--204.

\bibitem{Gyo1} K.~Gy\H{o}ry, Thue inequalities with a small number of primitive solutions,  Period. Math. Hungar.  $42$  (2001),  no. 1-2, 199-209. 

\bibitem{LM} D.  Lewis and K. Mahler, Representation of integers by binary forms, Acta Arith. 6 (1961), 333-363. 

\bibitem{Mah9} K.~Mahler, Zur Approximation algebraischer Zahlen III. \"Uber die mittlere Anzahl der Darstellungen grosser Zahlen durch 
bin\"{a}re Formen, Acta. Math. $62$ (1933), 91-166.

\bibitem{Mahext} K.~Mahler, On two extremum properties of polynomials, Illinois J. Math. $7$ 1(963) 681-701. 

\bibitem{Mah} K.~Mahler, An inequality for the discriminant of a polynomial, Michigan Math. J. $11$ (1964), 257-262.

\bibitem{Mig} M.~Mignotte, A note on the equation $a x^n - by^n = c$, Acta Arith. $75$ (1996) no. 3, 287--295.

\bibitem{Mu} J.~Mueller, Counting solutions of $|ax^r-by^r|\leq h$,   Quart. J. Math. Oxford Ser. (2) 38 (1987), no. 152, 503-513. 

\bibitem{MS87} J.~Mueller and W. M. Schmidt, Trinomial Thue equations and inequalities, J. Reine Angew. Math., 379 (1987), 76-99.

\bibitem{MS88} J.~Mueller and W. M. Schmidt, Thue's equation and a conjecture of Siegel, Acta Math. 160 (1988), no. 3-4, 207-247. 

\bibitem{S87} W.~M.~Schmidt, Thue equations with few coefficients, Trans. Amer. Math. Soc. 303 (1987), 241-255.

\bibitem{Sch-Book}  W.~M.~Schmidt, Diophantine Approximations and Diophantine Equations, Lecture Notes in Mathematics 1467, Springer Verlag, 1991.

\bibitem{Sie2} C.~L.~Siegel, Die Gleichung $ax^n-by^n=c$, Math. Ann.
  $114$ (1937), 57--68. 

 \bibitem{Ste} C.~L.~Stewart,  On the number of solutions of polynomial congruences and Thue equations,  J. Amer. Math. Soc.  $4$  (1991),   793-835.


 
\bibitem{Thu} A.~Thue, Berechnung aller L\"osungen gewisser Gleichungen von der form $ax^r - by^r = f$.
    Vid. Skrifter I Mat.-Naturv. Klasse (1918), 1-9.
    
    \bibitem{Thun}  J.~L.~ Thunder, on Thue inequalities and a conjecture of Schmidt, J. Number Theory $52$ (1995), 319-328.
    
   \bibitem{ThunAnn}  J.~L.~ Thunder, Decomposable form inequalities, Ann. of Math. (2) 153 (2001), no. 3, 767-804.


\end{thebibliography}
\end{document}